\title{On derived equivalences of lines, rectangles and triangles}
\author{Sefi Ladkani}
\address{Max-Planck-Institut f\"{u}r Mathematik, Vivatsgasse 7, 53111 Bonn,
Germany}
\curraddr{Mathematical Institute of the University of Bonn, Endenicher
Allee 60, 53115 Bonn, Germany}
\email{sefil@math.uni-bonn.de}
\thanks{This work was supported in part by a European Postdoctoral
Institute fellowship.}
\subjclass[2010]{16E35, 16S50, 16G20, 16G70, 18G15}
\DeclareMathOperator{\add}{add}
\DeclareMathOperator{\Aus}{Aus}
\DeclareMathOperator{\sAus}{\underline{Aus}}
\DeclareMathOperator{\End}{End}
\DeclareMathOperator{\h}{H}
\DeclareMathOperator{\Hom}{Hom}
\DeclareMathOperator{\Mod}{Mod}
\DeclareMathOperator{\modf}{mod}
\DeclareMathOperator{\per}{per}
\DeclareMathOperator{\Proj}{Proj}
\DeclareMathOperator{\proj}{proj}
\DeclareMathOperator{\RHom}{\mathbf{R}Hom}
\newcommand{\bZ}{\mathbb{Z}}
\newcommand{\cA}{\mathcal{A}}
\newcommand{\cC}{\mathcal{C}}
\newcommand{\cD}{\mathcal{D}}
\newcommand{\cK}{\mathcal{K}}
\newcommand{\cT}{\mathcal{T}}
\newcommand{\eps}{\varepsilon}
\newcommand{\gL}{\Lambda}
\newcommand{\vphi}{\varphi}
\newcommand{\dA}{\cD(A)}
\newcommand{\dB}{\cD(B)}
\newcommand{\dAB}{\cD(A \ten B)}
\newcommand{\dgL}{\cD(\gL)}
\newcommand{\tgL}[1]{\cT_{#1}(\gL)}
\newcommand{\ten}{\otimes}
\newcommand{\tenl}{\stackrel{\mathbf{L}}{\otimes}}
\theoremstyle{plain}
\newtheorem{thm}{Theorem}
\newtheorem*{thm*}{Theorem}
\newtheorem{lemma}{Lemma}[section]
\newtheorem{prop}[lemma]{Proposition}
\newtheorem{cor}[lemma]{Corollary}
\theoremstyle{definition}
\newtheorem{remark}[lemma]{Remark}
\numberwithin{equation}{section}
\begin{document}

\begin{abstract}
We present a method to construct new tilting complexes from existing
ones using tensor products, generalizing a result of Rickard. The
endomorphism rings of these complexes are generalized matrix rings that
are ``componentwise'' tensor products, allowing us to obtain many
derived equivalences that have not been observed by using previous
techniques.

Particular examples include algebras generalizing the ADE-chain related
to singularity theory, incidence algebras of posets and certain
Auslander algebras or more generally endomorphism algebras of initial
preprojective modules over path algebras of quivers. Many of these
algebras are fractionally Calabi-Yau and we explicitly compute their CY
dimensions. Among the quivers of these algebras one can find shapes of
lines, rectangles and triangles.
\end{abstract}

\maketitle

\section*{Introduction}

This work deals with derived equivalences of various rings and
algebras. One could argue that the question of derived equivalence has
been settled by the seminal result of Rickard~\cite{Rickard89}, stating
that for two rings $R$ and $S$, the derived categories of modules
$\cD(R)$ and $\cD(S)$ are equivalent as triangulated categories if and
only if there exists a so-called tilting complex $T \in \cD(R)$
such that $\End_{\cD(R)}(T) \simeq S$.

Nevertheless, when we are faced with two explicit such rings and want
to assess their derived equivalence, it is sometimes notoriously
difficult to decide whether a tilting complex exists, and if so, to
construct it explicitly.

It is therefore mostly beneficial to have at our disposal techniques to
construct tilting complexes. Such techniques have been invented in
relation with tilting theory of finite-dimensional algebras and modular
representation theory, see for example the
books~\cite{HandbookTilting07} and~\cite{KoenigZimmermann98} and the
many references therein.

We present a systematic way to construct new tilting complexes from
existing ones using tensor products, generalizing a result of Rickard
in~\cite{Rickard91}. In our variation, the resulting endomorphism rings
are no longer tensor products as in~\cite{Rickard91}, but rather
generalized matrix rings that are ``componentwise'' tensor products. As
the class of such rings is much broader, this allows us to obtain many
derived equivalences that have not been observed by using previous
techniques.

\begin{figure}[b]
\[
\xymatrix@R=1pc@C=0.5pc{
& & & {\bullet} \ar[dr] \\
& & {\bullet} \ar[ur] \ar@{.}[rr] \ar[dr] & &
{\bullet} \ar[dr] \\
& {\bullet} \ar[ur] \ar@{.}[rr] \ar[dr] &&
{\bullet} \ar[ur] \ar@{.}[rr] \ar[dr] &&
{\bullet} \ar[dr] \\
{\bullet} \ar[ur] \ar@{.}[rr] & &
{\bullet} \ar[ur] \ar@{.}[rr] & &
{\bullet} \ar[ur] \ar@{.}[rr] & &
{\bullet}
}
\qquad
\xymatrix@R=1pc@C=0.5pc{
{\bullet} \ar[dr] & &  {\bullet} \ar[ll] \ar[dr] \\
& {\bullet} \ar@{.}[ur] \ar[dr] & & {\bullet} \ar[ll] \ar[dr] \\
& & {\bullet} \ar@{.}[ur] \ar[dr] & & {\bullet} \ar[ll] \ar[dr] \\
& & & {\bullet} \ar@{.}[ur] \ar[dr] & & {\bullet} \ar[ll] \ar[dr] \\
& & & & {\bullet} \ar@{.}[ur] & & {\bullet} \ar[ll]
}
\]
\[
\xymatrix@=1.5pc{
{\bullet} \ar[r] \ar@{.}@/^0.75pc/[rrr] &
{\bullet} \ar[r] \ar@{.}@/_0.75pc/[rrr] &
{\bullet} \ar[r] \ar@{.}@/^0.75pc/[rrr] &
{\bullet} \ar[r] \ar@{.}@/_0.75pc/[rrr] &
{\bullet} \ar[r] \ar@{.}@/^0.75pc/[rrr] &
{\bullet} \ar[r] \ar@{.}@/_0.75pc/[rrr] &
{\bullet} \ar[r] \ar@{.}@/^0.75pc/[rrr] &
{\bullet} \ar[r] &
{\bullet} \ar[r] &
{\bullet}
}
\]
\caption{Quivers with relations of three derived equivalent algebras:
$A(10,3)$ (\emph{line}), the tensor product $kA_5 \ten_k kA_2$
(\emph{rectangle}) and the stable Auslander algebra of the path algebra
of $kA_5$ (\emph{triangle}).} \label{fig:trirectline}
\end{figure}
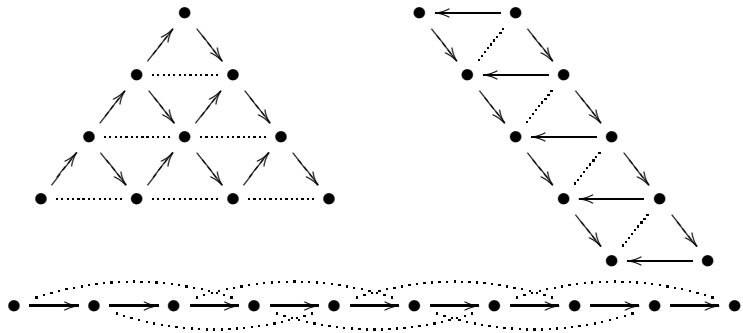

As an example of our methods, we consider the following three families
of algebras. The first, visualized pictorially as \emph{``lines''},
consists of algebras denoted $A(n,r+1)$ which arise from the linear
quiver $A_n$ by taking its path algebra (over some field $k$) modulo
the ideal generated by all the paths of a given length $r+1$. The
second, consisting of the tensor products (over $k$) $kA_r \ten_k kA_n$
with $r, n \geq 1$, can be visualized as \emph{``rectangles''}, as
their quivers are rectangles (with fully commutative relations).
Members of these two families have been considered
in~\cite{LenzingdelaPena08} in connection with derived accessible
algebras and spectral methods.

It will be a consequence of our results that for any value of $r$ and
$n$, the line $A(r \cdot n, r+1)$ and the rectangle $kA_r \ten_k kA_n$
are derived equivalent. As, for $r=2$ and $n \leq 8$, each algebra
$A(n,3)$ is derived equivalent to the corresponding path algebra in the
sequence $A_1, A_2, A_3, D_4, D_5, E_6, E_7, E_8$ known as the ADE
chain~\cite{ADE08}, one may think of these algebras for $r > 2$ as
higher ADE-chains.

The relevance of these algebras to other branches of representation
theory is demonstrated by the result of Kussin, Lenzing and
Meltzer~\cite{Lenzing10}, who have recently shown a relation between
the ``lines'' and the categories of coherent sheaves on weighted
projective lines, through the notion of the stable category of vector
bundles~\cite{LenzingdelaPena08}. In addition, as shown in the same
work, and also recently in~\cite{Chen09}, the stable categories of
submodules of nilpotent linear maps studied by Ringel and
Schmidmeier~\cite{RingelSchmidmeier08} are equivalent to bounded
derived categories of certain ``rectangles''.

The third family of algebras that we consider arises as the
endomorphism rings of certain initial modules in the preprojective
component of path algebras of quivers $Q$ without oriented cycles, in a
dual fashion to the terminal modules introduced by Geiss, Leclerc and
Schr\"{o}er~\cite[\S 2]{GLS08}, and includes also many Auslander
algebras of Dynkin quivers and their stable analogues. It will be again
a consequence of our results that such algebras are derived equivalent
to the tensor product of the path algebra of $Q$ with the path algebra
$kA_r$ for a suitable value of $r$. As the quivers of some of these
algebras have a shape of \emph{``triangles''} when $Q$ is the linearly
oriented diagram $A_n$, we obtain a derived equivalence between them
and the ``rectangles''.

One of the simplest examples of derived equivalence between members of
these three families is shown in Figure~\ref{fig:trirectline},
corresponding to the case where $n=5$ and $r=2$.

The paper is organized as follows. In Section~\ref{sec:results} we
present, in detail, our results. The proof of our main
Theorem~\ref{t:tensor} is given in Section~\ref{sec:prooftensor}. The
proofs of its various consequences are the contents of
Section~\ref{sec:proofother}.

\section{The results}
\label{sec:results}

\subsection{Tilting complexes for tensor products}

Let $A$ be an algebra over a commutative ring $k$. As the notion of
a tilting complex is central to our investigations, we recall that a
complex $T \in \dA$ is a \emph{tilting complex} if it has the following
two properties:
\begin{itemize}
\item
$T$ is \emph{exceptional}, that is, $\Hom_{\dA}(T, T[r])=0$ for all
$r \neq 0$;
\item
$T$ is a \emph{compact generator}, i.e.\ the smallest triangulated
subcategory of $\dA$ containing $T$ and closed under isomorphisms and
direct summands equals the subcategory of complexes (quasi-)isomorphic
to bounded complexes of finitely generated projective $A$-modules.
\end{itemize}

Let $B$ be an algebra over $k$, which is projective as a $k$-module.
Fix a tilting complex $U$ of projective
$B$-modules whose endomorphism algebra is projective as
$k$-module. Then for any tilting complex $T$ of $A$-modules, a theorem
of Rickard~\cite[Theorem~2.1]{Rickard91} tells us that $T \ten_k U$ is
a tilting complex for the tensor product $A \ten_k B$, with
endomorphism algebra which is again a tensor product, namely
$\End_{\dA} T \otimes_k \End_{\dB} U$.

Assume that $U$ decomposes as $U = U_1 \oplus U_2 \oplus \dots \oplus U_n$
(the $U_i$ need not be indecomposable). Obviously,
\[
T \ten_k U = (T \ten_k U_1) \oplus (T \ten_k U_2) \oplus \dots \oplus
(T \ten_k U_n) .
\]

Consider now a variation, where instead of taking just one tilting
complex $T$, we take $n$ of them, say $T_1, T_2, \dots, T_n$, and
replace each summand $T \ten_k U_i$ by $T_i \ten_k U_i$. Our following
Theorem~\ref{t:tensor} gives conditions on the $T_i$ which guarantee
that the resulting complex is tilting, and moreover computes its
endomorphism algebra in terms of those of $T_i$ and $U_i$.

\begin{thm}
\label{t:tensor} Let $k$ be a commutative ring and let $A$ and $B$ be
two $k$-algebras, with $B$ projective as $k$-module. Let $U_1, U_2, \dots,
U_n \in \dB$ be complexes bounded from above satisfying:
\begin{enumerate}
\renewcommand{\theenumi}{\roman{enumi}}
\item
$U = U_1 \oplus U_2 \oplus \dots \oplus U_n$ is a tilting complex in
$\dB$;
\item
\label{i:Uproj} The terms $U^r$ of the complex $U=(U^r)$ are projective
as $k$-modules;
\item
\label{i:EndUproj}
The endomorphism $k$-algebra $\End_{\dB}(U)$ is projective as
$k$-module.
\end{enumerate}

Let $T_1, T_2, \dots, T_n \in \dA$ be tilting complexes with the
property that for any $1 \leq i, j \leq n$,
\[
\Hom_{\dA}(T_i, T_j[r]) = 0 \text{ for any $r \neq 0$, whenever }
\Hom_{\dB}(U_i, U_j) \neq 0 .
\]

Then the complex
\begin{equation} \label{e:TUcomplex}
(T_1 \ten_k U_1) \oplus (T_2 \ten_k U_2) \oplus \dots \oplus
(T_n \ten_k U_n)
\end{equation}
of $(A \ten_k B)$-modules is a tilting complex in $\cD(A \ten_k B)$,
and its endomorphism ring is given by the matrix algebra
\begin{equation} \label{e:endo}
\begin{pmatrix}
M_{11} & M_{12} & \dots & M_{1n} \\
M_{21} & M_{22} & \dots & M_{2n} \\
\vdots & \vdots & \ddots & \vdots \\
M_{n1} & M_{n2} & \dots & M_{nn}
\end{pmatrix}
\end{equation}
where $M_{ij} = \Hom_{\dA}(T_j, T_i) \otimes_k \Hom_{\dB}(U_j, U_i)$
and the multiplication maps $M_{ij} \ten_k M_{jl} \to M_{il}$ are given
by the obvious compositions.
\end{thm}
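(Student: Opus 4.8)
The plan is to reduce the two defining properties of a tilting complex to a componentwise Künneth computation for the summands $X_i := T_i \ten_k U_i$ of $X := X_1 \oplus \dots \oplus X_n$, treating exceptionality and the endomorphism ring by one such computation and compact generation by a separate reduction to Rickard's theorem. Since each $U_i$ is a direct summand of the tilting complex $U$, it lies in $\per(B)$ and may be represented by a bounded complex of finitely generated projective $B$-modules; as $B$ is projective over $k$, such modules are $k$-projective, so this representative still satisfies (ii). Likewise each $T_i$ may be taken to be a bounded complex of finitely generated projective $A$-modules. Then each $X_i$ is a bounded complex of finitely generated projective $(A \ten_k B)$-modules — a termwise tensor $P \ten_k Q$ of finitely generated projectives is a summand of $(A \ten_k B)^{m}$ — so $X$ is compact, lies in $\per(A \ten_k B)$, and all morphisms out of the $X_i$ in the derived category are computed by the $\Hom$-complexes $\Hom^{\bullet}_{A \ten_k B}(X_i, X_j)$.

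First I would establish the componentwise Künneth isomorphism. Because $T_i$ and $U_i$ consist of finitely generated projectives, the canonical map $\Hom_A(P,P') \ten_k \Hom_B(Q,Q') \to \Hom_{A \ten_k B}(P \ten_k Q, P' \ten_k Q')$ is bijective on terms, and these assemble into a natural isomorphism of complexes of $k$-modules
\[
\Hom^{\bullet}_{A \ten_k B}(X_i, X_j) \;\cong\; \Hom^{\bullet}_A(T_i, T_j) \ten_k \Hom^{\bullet}_B(U_i, U_j).
\]
Now $\Hom^{\bullet}_B(U_i, U_j)$ is a bounded complex of $k$-projective modules (as $B$ is $k$-projective), so tensoring any bounded complex of $k$-modules with it computes the derived tensor product over $k$. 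Its cohomology, by the exceptionality (i) of $U$, is concentrated in degree $0$ and equals $V_{ij} := \Hom_{\dB}(U_i, U_j)$, which by (iii) is a direct summand of the $k$-projective algebra $\End_{\dB}(U)$, hence $k$-flat. Replacing this factor by $V_{ij}$ in degree $0$ and using flatness to pass tensoring through cohomology collapses the formula to
\[
\Hom_{\dAB}(X_i, X_j[r]) \;\cong\; \Hom_{\dA}(T_i, T_j[r]) \ten_k V_{ij}
\]
for every $r$, naturally in $i,j$ and compatibly with composition (the isomorphism being induced by $f \ten g \mapsto f \ten g$); here $\Hom^{\bullet}_A(T_i,T_j)$ computes $\RHom_A(T_i,T_j)$ since $T_i$ is a bounded complex of finitely generated projectives.

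With this in hand, exceptionality and the endomorphism ring are immediate. Summing over $i,j$ gives $\Hom_{\dAB}(X, X[r]) \cong \bigoplus_{i,j} \Hom_{\dA}(T_i, T_j[r]) \ten_k V_{ij}$. For $r \neq 0$ each summand vanishes: if $V_{ij} = 0$ the tensor factor kills it, while if $V_{ij} \neq 0$ the hypothesis on the $T_i$ forces $\Hom_{\dA}(T_i, T_j[r]) = 0$. Hence $X$ is exceptional. For $r = 0$, multiplicativity of the Künneth isomorphism identifies $\End_{\dAB}(X)$, as a $k$-algebra under composition, with the matrix algebra whose $(i,j)$-entry is $\Hom_{\dA}(T_j, T_i) \ten_k \Hom_{\dB}(U_j, U_i) = M_{ij}$ — the transposition of indices being the usual convention for an endomorphism of a direct sum written as a matrix of component maps $X_j \to X_i$ — with multiplication the tensor product of the compositions in $\dA$ and $\dB$, exactly as in \eqref{e:endo}.

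It remains to prove that $X$ is a compact generator, and here I would bypass Künneth. For each fixed $j$, the functor $- \ten_k U_j \colon \dA \to \dAB$ is triangulated (as $U_j$ is a bounded complex of $k$-flat modules) and sends $T_j$ to the summand $X_j$ of $X$, hence maps the thick subcategory generated by $T_j$ into the thick subcategory $\langle X \rangle$ generated by $X$. Since $T_j$ is a tilting complex, $A \in \langle T_j \rangle$, so $A \ten_k U_j \in \langle X \rangle$, and summing over $j$ yields $A \ten_k U \in \langle X \rangle$. Applying Rickard's theorem~\cite[Theorem~2.1]{Rickard91} to the trivial tilting complex $A$ of $A$ together with the tilting complex $U$ of $B$ (whose endomorphism algebra is $k$-projective by (iii)) shows that $A \ten_k U$ is a tilting complex for $A \ten_k B$, so it generates $\per(A \ten_k B)$; therefore $\langle X \rangle \supseteq \langle A \ten_k U \rangle = \per(A \ten_k B)$. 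I expect the main obstacle to be the second step: verifying that the Künneth isomorphism is genuinely natural and multiplicative while tracking the $k$-projectivity and $k$-flatness hypotheses that make the correction terms vanish; exceptionality, the endomorphism ring, and generation then follow formally.
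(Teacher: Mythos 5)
Your proof is correct and follows essentially the same route as the paper's: represent everything by bounded complexes of finitely generated projectives, establish the termwise K\"unneth isomorphism of $\Hom$-complexes, collapse it using the exceptionality of $U$ together with the $k$-projectivity of each $\Hom_{\dB}(U_i,U_j)$ (a summand of $\End_{\dB}(U)$), and handle generation separately via the exact functors $-\ten_k U_j$. The only cosmetic difference is in the last step, where the paper deduces $A\ten_k B\in\langle \add(A\ten_k U_1),\dots,\add(A\ten_k U_n)\rangle$ directly from $B\in\langle\add U\rangle$ and exactness of $A\ten_k-$, whereas you cite Rickard's theorem to conclude that $A\ten_k U$ generates $\per(A\ten_k B)$; both are valid.
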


Note that in general, the matrix ring~\eqref{e:endo} is not a tensor
product of two algebras, but rather a ``componentwise'' tensor product.
Namely, the $(i,j)$-th entry of~\eqref{e:endo} is the tensor product of
the corresponding $(i,j)$-th entries of the rings $\End_{\dA}(T_1
\oplus \dots \oplus T_n)$ and $\End_{\dB}(U) = \End_{\dB}(U_1 \oplus
\dots \oplus U_n)$, both viewed as $n$-by-$n$ matrix rings whose
$(i,j)$-th entry is $\Hom_{\dA}(T_j, T_i)$ and $\Hom_{\dB}(U_j, U_i)$,
respectively. Hence the theorem can produce many algebras which on
first sight might look far from being a tensor product, but
nevertheless such a product structure is apparently hidden in their
derived category.

The purpose of the technical conditions~(\ref{i:Uproj})
and~(\ref{i:EndUproj}) is just to ensure that the functors $- \ten_k
U_i$ are well-behaved. For example, when $k$ is a field, these
conditions are automatically satisfied. They are also satisfied in our
main applications, where the ring $B$, the terms of the complex $U$ and
the endomorphism ring $\End_{\dB}(U)$ are finitely generated and free
as $\bZ$-modules.

Theorem~\ref{t:tensor} provides us with a machinery to produce many new
derived equivalences. Namely, by fixing $B$ and the $U_i$ (usually of
combinatorial origin), we obtain for any $k$-algebra $A$ and tilting
complexes $T_1, \dots, T_n$ over $A$ satisfying some compatibility
conditions, a derived equivalence between $A \ten_k B$ and an algebra
built from the $T_i$ in a prescribed manner depending only on the
$U_i$. This will be demonstrated in the applications, see
Sections~\ref{ssec:ade} and~\ref{ssec:endo} below.

\subsection{Truncated algebras and ADE chains}
\label{ssec:ade}

For our first application, recall that the quiver $A_n$ ($n \geq 1$) is
the following directed graph on $n$ vertices
\[
\xymatrix{
{\bullet_1} \ar[r] & {\bullet_2} \ar[r] & \dots \ar[r] & {\bullet_n}
} ,
\]
and the path algebra $kA_n$ can be viewed as the $k$-algebra of upper
triangular $n$-by-$n$ matrices with entries in $k$. For any $k$-algebra
$\gL$, the $k$-algebra $\gL \ten_k kA_n$ thus consists of upper
triangular $n$-by-$n$ matrices with entries in $\gL$, and we denote it
by $\cT_n(\gL)$.

\begin{thm} \label{t:ade}
Let $\gL$ be a ring and let $T_1, \dots, T_n$ be tilting complexes in $\dgL$
satisfying $\Hom_{\dgL}(T_i, T_{i+1}[r]) = 0$ for all $1 \leq i < n$ and
$r \neq 0$.

Then the following matrix ring, with $\Hom$ and $\End$ computed in $\dgL$,
\[
\begin{pmatrix}
\End T_1 & 0 & 0 & \ldots & 0 \\
\Hom (T_1, T_2) & \End T_2 & 0 & \ldots & \vdots \\
0 & \Hom (T_2, T_3) & \End T_3 & \ddots & \vdots \\
\vdots & \ddots & \ddots & \ddots & 0 \\
0 & \ldots & 0 & \Hom (T_{n-1}, T_n) & \End T_n
\end{pmatrix}
\]
is derived equivalent to $\tgL{n}$.
\end{thm}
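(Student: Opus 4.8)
The plan is to deduce Theorem~\ref{t:ade} as a direct specialization of the main Theorem~\ref{t:tensor}. The key observation is that the algebra $\tgL{n} = \gL \ten_k kA_n$ has exactly the tensor-product form to which Theorem~\ref{t:tensor} applies, with $A = \gL$ and $B = kA_n$. So first I would fix $k = \bZ$ (or the prime ring), take $B = kA_n$, and choose a suitable decomposition $U = U_1 \oplus \dots \oplus U_n$ of a tilting complex over $B$. The natural choice is the trivial tilting complex $U = B$ itself, decomposed into its indecomposable projective summands $U_i = P_i = e_i B$, where $e_i$ are the primitive idempotents of $kA_n$ corresponding to the vertices. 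Since $kA_n$ is the upper-triangular matrix ring over $k$, it is free (hence projective) as a $k$-module, its summands $P_i$ have $k$-free terms, and its endomorphism ring $\End_{\dB}(U) = kA_n$ is again $k$-free; thus conditions~(\ref{i:Uproj}) and~(\ref{i:EndUproj}) of Theorem~\ref{t:tensor} hold automatically.

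The crucial step is to compute the Hom-spaces $\Hom_{\dB}(U_i, U_j) = \Hom_{kA_n}(P_i, P_j)$ and to match the vanishing pattern of Theorem~\ref{t:ade} against the hypothesis of Theorem~\ref{t:tensor}. For the path algebra of the linearly oriented $A_n$, the space $\Hom_{kA_n}(P_i, P_j)$ is one-dimensional when there is a path from vertex $j$ to vertex $i$ and zero otherwise; with the orientation and convention fixed so that $\Hom(P_i,P_j) \neq 0$ precisely when $j \leq i$ (equivalently when $i \geq j$), this recovers exactly the lower-triangular shape appearing in the matrix ring of Theorem~\ref{t:ade}. I would verify that the hypothesis ``$\Hom_{\dgL}(T_i, T_{i+1}[r]) = 0$ for $r \neq 0$'' is enough: the nonzero entries $\Hom_{kA_n}(P_i, P_j)$ with $i \neq j$ are generated under composition by the arrows, i.e.\ by the adjacent cases $\Hom(P_i, P_{i-1})$, so every pair $(i,j)$ with $\Hom_{\dB}(U_i, U_j) \neq 0$ is reachable from the elementary adjacent pairs. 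Consequently the compatibility condition of Theorem~\ref{t:tensor}, namely $\Hom_{\dA}(T_i, T_j[r]) = 0$ whenever $\Hom_{\dB}(U_i, U_j) \neq 0$, must be derived from the single adjacency hypothesis on the $T_i$.

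This last reduction is the step I expect to be the main obstacle, since a priori Theorem~\ref{t:tensor} demands the exceptional-pair condition for \emph{all} pairs $(i,j)$ in the nonzero locus, whereas Theorem~\ref{t:ade} only assumes it for consecutive indices $i, i+1$. To bridge the gap I would argue that for the chosen ordering the nonzero locus of $\Hom_{\dB}(U_i,U_j)$ is contained in $\{(i,j) : j = i \text{ or } j = i+1\}$ after an appropriate relabelling of the summands; equivalently, by reindexing the $U_i$ so that the relevant off-diagonal Hom-spaces occur only between adjacent indices, the full hypothesis of Theorem~\ref{t:tensor} collapses to exactly the stated adjacency condition. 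I would then read off the endomorphism ring from~\eqref{e:endo}: the entry $M_{ij} = \Hom_{\dgL}(T_j, T_i) \ten_k \Hom_{\dB}(U_j, U_i)$ vanishes unless $(i,j)$ lies in the lower-triangular band, and on that band $\Hom_{\dB}(U_j,U_i) \cong k$ is free of rank one, so $M_{ij} \cong \Hom_{\dgL}(T_j, T_i)$. Matching this against the displayed matrix — diagonal entries $\End T_i$, subdiagonal entries $\Hom(T_{i-1}, T_i)$, all else zero — completes the identification and hence the derived equivalence with $\tgL{n}$.
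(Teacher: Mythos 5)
Your choice of tilting complex over $B = kA_n$ is where the argument breaks down. Taking $U = B$ with $U_i = P_i$ gives $\Hom_{\dB}(U_i,U_j) \neq 0$ precisely when $j \leq i$ --- a \emph{full} triangular pattern with $\binom{n}{2}+n$ nonzero pairs --- and no relabelling of the $n$ summands can shrink this to the bidiagonal locus $\{(i,j) : j-i \in \{0,1\}\}$, which has only $2n-1$ pairs; the two patterns are not related by any permutation once $n \geq 3$. Your fallback, that the non-adjacent Hom-spaces are ``generated under composition by the arrows'' so that the vanishing of $\Hom_{\dgL}(T_i,T_j[r])$ for non-adjacent $(i,j)$ should follow from the adjacent cases, is also false: exceptionality of the pairs $(T_i,T_{i+1})$ and $(T_{i+1},T_{i+2})$ does not imply exceptionality of $(T_i,T_{i+2})$. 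Indeed, if that implication held, Theorem~\ref{t:ade} and Theorem~\ref{t:endo} would have equivalent hypotheses, yet they produce genuinely different endomorphism rings (bidiagonal versus full lower triangular). With $U_i = P_i$ (or $P_{n+1-i}$) you are on the road to proving Theorem~\ref{t:endo}, not Theorem~\ref{t:ade}.

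The missing idea is to use a \emph{different} tilting complex over $\bZ A_n$, one whose Hom-pattern is already bidiagonal. The paper takes $U_i = S_i[i-1]$, the simple modules shifted by increasing degrees: Proposition~\ref{p:RAntilt}(\ref{i:RAntiltS}) shows that $S_1 \oplus S_2[1] \oplus \dots \oplus S_n[n-1]$ is a tilting complex in $\cD(\bZ A_n)$ with $\Hom(S_i[i-1],S_j[j-1])$ equal to $\bZ$ if $j-i \in \{0,1\}$ and $0$ otherwise. With this choice the compatibility condition of Theorem~\ref{t:tensor} is required exactly for $j=i$ (automatic, since each $T_i$ is a tilting complex) and for $j=i+1$ (the stated hypothesis), and the matrix~\eqref{e:endo} specializes exactly to the bidiagonal ring in the statement, since $M_{ij} = \Hom_{\dgL}(T_j,T_i) \ten \Hom(U_j,U_i)$ vanishes unless $i-j \in \{0,1\}$ and the second tensor factor is then $\bZ$. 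The rest of your setup --- $k=\bZ$, $A=\gL$, $B=\bZ A_n$, and the verification of conditions~(\ref{i:Uproj}) and~(\ref{i:EndUproj}) --- is fine and matches the paper.
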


Recall that a $\gL$-module is a \emph{tilting module} if, when viewed
as a complex (concentrated in degree $0$), it is a tilting complex. The
above theorem can be reformulated in the language of iterated tilting,
as follows.

\begin{cor} \label{c:itertilt}
Let $\gL$ be a ring.
Set $\gL_1 = \gL$ and define, for $1 \leq i < n$, $\gL_{i+1} =
\End_{\gL_i}(Q_i)$ where $Q_i$ is a tilting $\gL_i$-module.

Then the matrix ring
\begin{equation} \label{e:itertilt}
\begin{pmatrix}
\gL_1 & 0 & 0 & \ldots & 0 \\
Q_1 & \gL_2 & 0 & \ldots & \vdots \\
0 & Q_2 & \gL_3 & \ddots & \vdots \\
\vdots & \ddots & \ddots & \ddots & 0 \\
0 & \ldots & 0 & Q_{n-1} & \gL_n
\end{pmatrix}
\end{equation}
is derived equivalent to $\tgL{n}$.
\end{cor}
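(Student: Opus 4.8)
The plan is to deduce Corollary~\ref{c:itertilt} directly from Theorem~\ref{t:ade} by exhibiting, for each $i$, a suitable tilting complex $T_i \in \dgL$ whose endomorphism ring is $\gL_i$ and whose $\Hom$ spaces reproduce the off-diagonal entries $Q_i$ of~\eqref{e:itertilt}. The natural candidate is to build the $T_i$ iteratively as tilting \emph{modules} (complexes concentrated in degree $0$), so that the vanishing hypothesis $\Hom_{\dgL}(T_i,T_{i+1}[r])=0$ for $r\neq 0$ of Theorem~\ref{t:ade} holds automatically once we know the higher $\Ext$-groups between the relevant modules vanish.

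First I would set $T_1 = \gL$, the free module of rank one, which is certainly a tilting complex with $\End_{\dgL}(T_1)=\gL=\gL_1$. Then, given $T_i$ with $\End_{\dgL}(T_i)\simeq\gL_i$, I would define $T_{i+1}$ by transporting the tilting $\gL_i$-module $Q_i$ back to $\dgL$ along the derived equivalence induced by $T_i$. Concretely, the derived equivalence $\dgL \simeq \cD(\gL_i)$ sends $T_i$ to the free $\gL_i$-module, so I would take $T_{i+1}$ to be the image of $Q_i$ under the inverse equivalence $\cD(\gL_i)\to\dgL$. Because tilting modules go to tilting complexes under a derived equivalence, each $T_{i+1}$ is again a tilting complex in $\dgL$, and its endomorphism ring is $\End_{\cD(\gL_i)}(Q_i)=\End_{\gL_i}(Q_i)=\gL_{i+1}$, matching the diagonal entries.

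The remaining point is to identify the off-diagonal $\Hom$-spaces and verify the vanishing hypothesis. Applying the equivalence $\dgL\simeq\cD(\gL_i)$ to the pair $(T_i,T_{i+1})$, I compute $\Hom_{\dgL}(T_i,T_{i+1}[r]) \simeq \Hom_{\cD(\gL_i)}(\gL_i, Q_i[r])$, which equals $Q_i$ for $r=0$ and vanishes for $r\neq 0$ since $Q_i$ is a module concentrated in degree $0$. This simultaneously establishes the hypothesis of Theorem~\ref{t:ade} and shows that $\Hom_{\dgL}(T_i,T_{i+1})\simeq Q_i$ as the appropriate bimodule, so the matrix ring of Theorem~\ref{t:ade} coincides with~\eqref{e:itertilt}. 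Invoking Theorem~\ref{t:ade} then yields the derived equivalence with $\tgL{n}$.

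I expect the main obstacle to be purely bookkeeping rather than conceptual: one must check that the bimodule structure on $\Hom_{\dgL}(T_i,T_{i+1})$, as an $\End T_{i+1}$–$\End T_i$–bimodule, is carried by the equivalence to the correct $\gL_{i+1}$–$\gL_i$–bimodule structure on $Q_i$, and that the composition maps in the matrix ring of Theorem~\ref{t:ade} match those implicit in~\eqref{e:itertilt}. This requires only that the chosen derived equivalences be compatible in the obvious way, which follows from functoriality, but it is the step where care is needed to ensure the two matrix rings are genuinely isomorphic and not merely built from isomorphic entries.
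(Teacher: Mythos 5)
Your proposal is correct and follows essentially the same route as the paper: the paper also sets $T_1=\gL$ and defines $T_{i+1}$ as the image of $\gL_{i+1}$ (equivalently, of $Q_i$) under the composite of the standard equivalences $F_j = -\tenl Q_j : \cD(\gL_{j+1})\to\cD(\gL_j)$, then computes $\Hom_{\dgL}(T_i,T_{i+1}[r])\simeq\Hom_{\cD(\gL_i)}(\gL_i,Q_i[r])$ exactly as you do before invoking Theorem~\ref{t:ade}. Your closing remark about matching bimodule structures is the right point to be careful about, and the paper handles it implicitly by using the explicit tensor functors $F_j$.
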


As an application, we consider the following two families of algebras
over a commutative ring $k$. The first, consists of algebras denoted
$A(n, m+1)$ that are the quotient of the path algebra of $A_n$ by the
ideal generated by all the paths of length $m+1$. The piecewise
hereditary such algebras (when $k$ is an algebraically closed field)
have been investigated in~\cite{HappelSeidel10}.

In the spirit of the philosophy of~\cite{LenzingdelaPena08}, when
studying the derived equivalence class of the algebras $A(n,m+1)$, one
should not look at a single such algebra each time, but rather consider
them in a sequence, that is, fix $m$ and let the number of vertices $n$
vary. For $m=1$, the algebra $A(n,2)$ is derived equivalent to the path
algebra of $A_n$ (without relations), as follows from~\cite[(IV,
6.7)]{Happel88}. Fixing $m=2$ and setting $n=1,2,\dots,8$, the algebras
$A(n,3)$ are derived equivalent to the path algebras of the quivers in
the sequence
\[
A_1, A_2, A_3, D_4, D_5, E_6, E_7, E_8
\]
known as the \emph{ADE-chain}~\cite{ADE08}, as observed
in~\cite{LenzingdelaPena08} using spectral techniques.

The ADE-chain occurs also in our second family, which consists of the
algebras $kA_m \ten_k kA_n$. These can be viewed as the incidence
algebras of the products $A_m \times A_n$ of two linear orders, or more
pictorially as \emph{``rectangles''}, since they can be identified with
the path algebras of the fully commutative rectangle with $m \times n$
vertices, see Figure~\ref{fig:rectline}. When $m=1$, we obviously get
back the path algebra $kA_n$. But for $m=2$, setting $n=1, 2, 3, 4$,
the algebras $kA_2 \ten_k kA_n$ are derived equivalent to the path
algebras of the quivers in the sequence $A_2, D_4, E_6, E_8$, so that
$kA_2 \ten_k kA_n$ is derived equivalent to $A(2n, 3)$ for $n \leq 4$.
The following result generalizes these equivalences and puts them into
perspective.

\begin{cor} \label{c:rectline}
Let $k$ be a commutative ring and let $m,n \geq 1$. Then the
$k$-algebras $A(m \cdot n, m+1)$ and $kA_m \ten_k kA_n$ are derived
equivalent. In particular, $A(m \cdot n, m+1)$ and $A(m \cdot n, n+1)$
are derived equivalent.
\end{cor}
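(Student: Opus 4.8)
The plan is to derive Corollary~\ref{c:rectline} from Corollary~\ref{c:itertilt} by exhibiting the rectangle $kA_m \ten_k kA_n$ as a matrix ring of the shape~\eqref{e:itertilt} over the base ring $\gL = A(m\cdot n, m+1)$. Observe first that $kA_m \ten_k kA_n = \cT_n(kA_m)$ by the identification of $kA_n$ with upper triangular $n$-by-$n$ matrices, so it suffices to realize $\cT_n(\gL)$ with $\gL = kA_m$ as being derived equivalent to $A(m\cdot n, m+1)$. Concretely, I would set $\gL = kA_m$ and seek an iterated sequence of tilting modules $Q_1, \dots, Q_{n-1}$ so that the resulting matrix ring~\eqref{e:itertilt}, which Corollary~\ref{c:itertilt} asserts is derived equivalent to $\cT_n(kA_m) = kA_m \ten_k kA_n$, turns out to be isomorphic to $A(m \cdot n, m+1)$.

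The key computation is to identify the right iterated tilting data. Here I expect that each $\gL_i$ should again be (isomorphic to) $kA_m$, so that the derived self-equivalence of $kA_m$ is governed by a single well-chosen tilting module $Q$, applied $n-1$ times. The natural candidate is the APR-tilting-type module, or more precisely a tilting $kA_m$-module $Q$ whose endomorphism ring is again $kA_m$ but whose bimodule structure, recorded in the off-diagonal entry $Q_i$ of~\eqref{e:itertilt}, splices consecutive copies of $kA_m$ together in exactly the way that produces a single path algebra truncated at length $m+1$. The heart of the argument is then a direct but careful presentation of the matrix ring~\eqref{e:itertilt}: one computes generators and relations, reading off the quiver as the linear $A_{m\cdot n}$ quiver and checking that the relations generated are precisely the paths of length $m+1$. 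This is where the number $m\cdot n$ arises, as $n$ blocks each contributing $m$ vertices, with the tilting bimodules providing the connecting arrows between blocks while the truncation length $m+1$ is enforced uniformly.

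The main obstacle will be verifying the vanishing hypothesis $\Hom_{\dgL}(T_i, T_{i+1}[r]) = 0$ for $r \neq 0$ of Theorem~\ref{t:ade} (equivalently, checking that the $Q_i$ are genuine tilting modules with the stated endomorphism rings in Corollary~\ref{c:itertilt}), together with the bookkeeping that matches the resulting matrix presentation against the defining presentation of $A(m\cdot n, m+1)$. Both amount to explicit but somewhat delicate computations inside the module category of $kA_m$: I would need to identify $Q$ explicitly as a direct sum of indecomposable $kA_m$-modules, compute $\End_{kA_m}(Q)$ and the relevant $\Hom$-spaces, and confirm the higher self-extensions vanish. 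The final ``in particular'' statement is then immediate by symmetry: applying the equivalence twice, once with the roles of $m$ and $n$ interchanged, yields that both $A(m\cdot n, m+1)$ and $A(m\cdot n, n+1)$ are derived equivalent to $kA_m \ten_k kA_n \cong kA_n \ten_k kA_m$, hence to each other.
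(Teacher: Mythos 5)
Your proposal follows essentially the same route as the paper: the paper likewise reduces to Corollary~\ref{c:itertilt} with $\gL_i = kA_m$ and a single tilting module $Q$ repeated $n-1$ times, and then matches the resulting matrix ring~\eqref{e:itertilt} against the defining presentation of $A(m\cdot n, m+1)$ by an explicit computation with bases, concluding the ``in particular'' part by symmetry exactly as you do. The one detail you leave open is the identification of $Q$: it is not an APR-type tilt (which would change the orientation at a vertex) but the dual $Q = D(kA_m) = I_1 \oplus \dots \oplus I_m$ of the regular module, whose endomorphism ring is again $kA_m$ by Proposition~\ref{p:RAntilt}(\ref{i:RAntiltI}) and whose bimodule structure~\eqref{e:Qe} provides precisely the splicing of consecutive blocks that you describe.
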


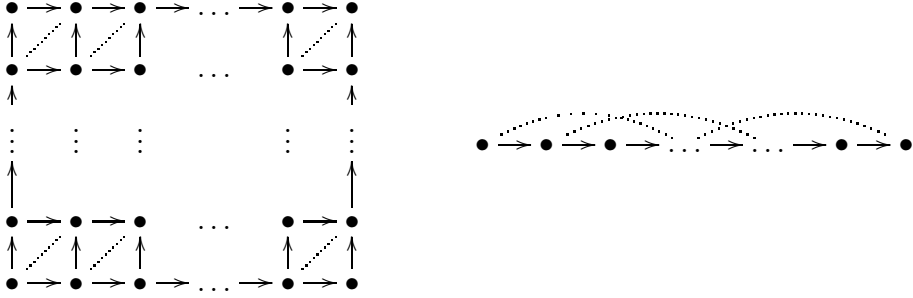
\begin{figure}
\begin{align*}
\begin{array}{c}
\xymatrix@=1pc{
{\bullet} \ar[r] & {\bullet} \ar[r] & {\bullet} \ar[r] &
{\ldots} \ar[r] & {\bullet} \ar[r] & {\bullet} \\
{\bullet} \ar[u] \ar[r] \ar@{.}[ur] & {\bullet} \ar[u] \ar[r] \ar@{.}[ur] &
{\bullet} \ar[u] & {\ldots} & {\bullet} \ar[u] \ar[r] \ar@{.}[ur] &
{\bullet} \ar[u] \\
{\vdots} \ar[u] & {\vdots} & {\vdots} & & {\vdots} & {\vdots} \ar[u] \\
{\bullet} \ar[u] \ar[r] & {\bullet} \ar[r] & {\bullet} & {\ldots} &
{\bullet} \ar[r] & {\bullet} \ar[u] \\
{\bullet} \ar[u] \ar[r] \ar@{.}[ur] & {\bullet} \ar[u] \ar[r] \ar@{.}[ur] &
{\bullet} \ar[u] \ar[r] & {\ldots} \ar[r] & {\bullet} \ar[u] \ar[r] \ar@{.}[ur] &
{\bullet} \ar[u]
}
\end{array}
& \qquad &
\xymatrix@=1pc{
{\bullet} \ar[r] \ar@{.}@/^1pc/[rrr] & {\bullet} \ar[r] \ar@{.}@/^1pc/[rrr] &
{\bullet} \ar[r] & {\ldots} \ar[r] \ar@{.}@/^1pc/[rrr] & {\ldots} \ar[r] &
{\bullet} \ar[r] & {\bullet}
}
\end{align*}
\caption{Quivers with relations of the ``rectangle'' (left) and the
``line'' (right) algebras. The dotted lines indicate the relations.}
\label{fig:rectline}
\end{figure}

Furthermore, as the derived equivalence actually holds over arbitrary
commutative ring $k$, it can be considered as ``universal'', depending
only on the underlying combinatorics and not on the algebraic data.

\smallskip

For the rest of this subsection, assume that $k$ is a field, and denote
by $D = \Hom_k(-, k)$ the usual duality. If $\gL$ is a finite
dimensional algebra over $k$, then $D\gL_{\gL}$ is an injective
co-generator. When it has finite projective dimension and $\gL_{\gL}$
has finite injective dimension, the algebra $\gL$ is called
\emph{Gorenstein}.

\begin{cor} \label{c:replica}
Let $k$ be a field and let $\gL$ be a finite dimensional $k$-algebra
which is Gorenstein. Then the triangular matrix algebras $\tgL{n}$ and
\[
\begin{pmatrix}
\gL & D\gL & 0 & \ldots & 0 \\
0 & \gL & D\gL & \ddots & \vdots \\
\vdots & 0 & \gL & \ddots & 0 \\
\vdots & & \ddots & \ddots & D\gL \\
0 & \ldots & \ldots & 0 & \gL
\end{pmatrix}
\]
are derived equivalent.
\end{cor}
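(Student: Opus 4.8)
The plan is to deduce Corollary~\ref{c:replica} from Corollary~\ref{c:itertilt} by exhibiting the specific tilting modules that implement the iterated tilting. I would set $\gL_1 = \gL_2 = \dots = \gL_n = \gL$, so that each step of the iteration is a \emph{self-tilting}: I need tilting $\gL$-modules $Q_i$ whose endomorphism algebra is again $\gL$, and such that the off-diagonal bimodule $Q_i$ appearing in~\eqref{e:itertilt} is isomorphic to ${}_\gL D\gL_\gL$. The natural candidate, since $\gL$ is Gorenstein, is the Nakayama-type tilting module built from $D\gL$.

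First I would recall that for a Gorenstein algebra $\gL$, the module $D\gL$ has finite projective dimension and $\gL$ has finite injective dimension; this is precisely the hypothesis guaranteeing that $D\gL$ is a (generalized) tilting module, i.e.\ it is exceptional and generates the correct subcategory of $\dgL$. I would take $Q_i = D\gL$ for each $i$ (or more precisely, set up the iteration so that $\gL_{i+1} = \End_{\gL_i}(Q_i) \simeq \gL$). The key computation is that $\End_{\gL}(D\gL) \simeq \gL$ canonically, via the duality $D$: an endomorphism of $D\gL$ as a right-or-left module corresponds under $D$ to an endomorphism of $\gL$ by the regular representation, which is $\gL$ acting on itself, giving $\gL^{\mathrm{op}}$ or $\gL$ depending on sidedness conventions. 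I would be careful here to match the left/right module structures so that the resulting off-diagonal entry $Q_i$, viewed as a $(\gL_{i+1}, \gL_i) = (\gL,\gL)$-bimodule, is exactly $D\gL$. With these choices, the matrix ring~\eqref{e:itertilt} becomes precisely the bidiagonal matrix algebra in the statement, and Corollary~\ref{c:itertilt} immediately yields derived equivalence with $\tgL{n}$.

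The main obstacle I anticipate is verifying that $D\gL$ is genuinely a tilting module (not merely a module) under the Gorenstein hypothesis, and checking that the self-equivalence $\End_\gL(D\gL)\simeq\gL$ is compatible across the iteration. Concretely, the generalized tilting condition requires $\Hom_{\dgL}(D\gL, D\gL[r]) = 0$ for $r\neq 0$ and that $D\gL$ generate $\per\gL$; the vanishing of self-extensions for $D\gL$ follows because $D\gL$ is injective as a module (so has no higher self-extensions computed via injective resolutions) combined with its finite projective dimension, while the generation property follows from $\gL_\gL$ having finite injective dimension, which forces $\gL$ to lie in the thick subcategory generated by $D\gL$. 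I would also need to confirm that the hypothesis $\Hom_{\dgL}(T_i, T_{i+1}[r])=0$ of Theorem~\ref{t:ade} — equivalently the compatibility needed for Corollary~\ref{c:itertilt} — is automatic here, since consecutive tilting modules are $\gL$ and $D\gL$ and the relevant $\Hom$-vanishing is again governed by the injectivity and finite projective dimension of $D\gL$. Once these homological facts are in place, the identification of the matrix ring is a formal bookkeeping step, so I expect the entire difficulty to concentrate in establishing the tilting property of $D\gL$ from the Gorenstein assumption.
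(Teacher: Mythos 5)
Your proposal is correct and follows exactly the paper's route: the paper also deduces Corollary~\ref{c:replica} immediately from Corollary~\ref{c:itertilt} by taking $Q_i = D\gL$ and $\gL_i = \gL$, citing that for a Gorenstein algebra $D\gL$ is a tilting module with $\End_{\gL}(D\gL) \simeq \gL$. The additional verifications you sketch (exceptionality from injectivity of $D\gL$, generation from the finite injective dimension of $\gL_{\gL}$, and the bimodule bookkeeping) are exactly the standard facts the paper leaves implicit.
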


The algebra appearing in the corollary is known as the
$(n-1)$-\emph{replicated algebra} of $\gL$ and it has connections with
$(n-1)$-cluster-categories, see~\cite{ABST08}. It is not clear a-priori
why replicated algebras of two derived equivalent Gorenstein algebras
should also be derived equivalent. But, in view of
Corollary~\ref{c:replica}, as tensor products behave well with respect
to derived equivalences, this is also the case for replicated algebras:
\begin{cor}
The $n$-replicated algebras of two derived equivalent, finite
dimensional, Gorenstein algebras are also derived equivalent.
\end{cor}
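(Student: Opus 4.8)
The plan is to bracket a single tensor-product derived equivalence between two applications of Corollary~\ref{c:replica}. Write $\gL$ and $\gL'$ for the two finite dimensional Gorenstein $k$-algebras, assumed derived equivalent. By Corollary~\ref{c:replica}, the $n$-replicated algebra of $\gL$ is derived equivalent to $\tgL{n+1} = \gL \ten_k kA_{n+1}$, and likewise the $n$-replicated algebra of $\gL'$ is derived equivalent to $\cT_{n+1}(\gL') = \gL' \ten_k kA_{n+1}$; here the Gorenstein hypothesis on \emph{both} algebras is precisely what licenses these two applications of the corollary. It therefore suffices to establish a derived equivalence
\[
\gL \ten_k kA_{n+1} \ \sim\ \gL' \ten_k kA_{n+1},
\]
after which the three equivalences chain together to give the claim.

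For this middle step I would invoke the single-summand case of Theorem~\ref{t:tensor}, which is exactly Rickard's tensor theorem. Since $\gL$ and $\gL'$ are derived equivalent, Rickard's characterization supplies a tilting complex $T \in \dgL$ with $\End_{\dgL}(T) \simeq \gL'$. Taking $A = \gL$ and $B = kA_{n+1}$, let $U = kA_{n+1}$ be the trivial tilting complex of $\mathcal{D}(kA_{n+1})$, namely the regular module in degree $0$, whose endomorphism algebra is $kA_{n+1}$ itself. Because $k$ is a field, the technical conditions~(\ref{i:Uproj}) and~(\ref{i:EndUproj}) hold automatically: $kA_{n+1}$ and its endomorphism algebra are finite dimensional, hence free, over $k$. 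Theorem~\ref{t:tensor} then shows that $T \ten_k kA_{n+1}$ is a tilting complex for $\gL \ten_k kA_{n+1}$ whose endomorphism algebra is $\End_{\dgL}(T) \ten_k kA_{n+1} \simeq \gL' \ten_k kA_{n+1}$, which is the desired equivalence.

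I do not expect a genuine obstacle: the corollary is a formal consequence of results already in hand, and the only point requiring care is that the tensoring step preserves derived equivalence. That step rests on the projectivity hypotheses of Theorem~\ref{t:tensor}, which are vacuous over a field, so nothing beyond $k$ being a field is needed for the middle equivalence. It is worth emphasizing that the Gorenstein assumption plays no role in the tensoring step; it is used solely to pass between the replicated algebras and the triangular matrix algebras $\cT_{n+1}(-)$ via Corollary~\ref{c:replica}.
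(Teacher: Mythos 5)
Your proposal is correct and matches the paper's (only sketched) argument exactly: the paper likewise brackets Rickard's tensor-product theorem --- ``tensor products behave well with respect to derived equivalences'' --- between two applications of Corollary~\ref{c:replica}. The only point worth noting is that the paper leaves the middle step as a one-line remark, whereas you correctly spell it out as the single-summand case of Theorem~\ref{t:tensor} with $U$ the regular module $kA_{n+1}$.
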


\subsection{Endomorphism algebras}
\label{ssec:endo}

Another application of Theorem~\ref{t:tensor} concerns the endomorphism
algebras of direct sums of tilting complexes. We start with the
following result on ``configurations'' of tilting complexes according
to the poset $\{ 1 < 2 < \dots < n\}$. It is possible to formulate
analogous results for general posets, but such generality is not needed
for our purposes.

\begin{thm} \label{t:endo}
Let $\gL$ be a ring, and let $T_1, \dots, T_n$ be tilting complexes in
$\dgL$ satisfying $\Hom_{\dgL}(T_i, T_j[r]) = 0$ for all $1 \leq i < j
\leq n$ and $r \neq 0$.

Then the matrix ring (with $\Hom$ and $\End$ computed in $\dgL$)
\[
\begin{pmatrix}
\End T_1 & 0 & 0 & \ldots & 0 \\
\Hom (T_1, T_2) & \End T_2 & 0 & \ldots & 0 \\
\Hom (T_1, T_3) & \Hom (T_2, T_3) & \End T_3 & \ldots & 0 \\
\vdots & \vdots & \vdots & \ddots & \vdots \\
\Hom (T_1, T_n) & \Hom (T_2, T_n) & \Hom (T_3, T_n) & \dots & \End T_n
\end{pmatrix}
\]
is derived equivalent to $\tgL{n}$.
\end{thm}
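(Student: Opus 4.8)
The plan is to deduce Theorem~\ref{t:endo} from the main Theorem~\ref{t:tensor} by choosing the combinatorial data $B$ and $U_1,\dots,U_n$ appropriately, in exactly the same spirit as Theorem~\ref{t:ade}. The target algebra $\tgL{n}$ is $\gL \ten_k kA_n$, so I would take $A = \gL$ and $B = kA_n$ over $k = \bZ$ (or the relevant base ring). The path algebra $kA_n$ is the algebra of upper triangular $n\times n$ matrices, and it is free of finite rank as a $\bZ$-module, so conditions (\ref{i:Uproj}) and (\ref{i:EndUproj}) will be satisfied automatically once the $U_i$ are chosen to be the indecomposable projectives (or a suitable tilting summand decomposition). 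The key point is to select the $U_i$ so that the pattern of nonvanishing of $\Hom_{\dB}(U_i, U_j)$ matches the poset $\{1 < 2 < \dots < n\}$, i.e.\ $\Hom_{\dB}(U_i, U_j) \neq 0$ precisely when $i \leq j$ in the order that makes the endomorphism matrix lower triangular as displayed.

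First I would take $U = B = kA_n$ itself as the tilting complex, decomposed as $U = U_1 \oplus \dots \oplus U_n$ where $U_i = P_i$ is the indecomposable projective $kA_n$-module at vertex $i$. Then $\End_{\dB}(U) = kA_n$, and with the standard conventions one has $\Hom_{\dB}(U_i, U_j) = \Hom_{kA_n}(P_i, P_j)$, which is a free $\bZ$-module of rank one when there is a path from $j$ to $i$ (equivalently $j \leq i$) and is zero otherwise. I would verify carefully that this matches the lower-triangular shape in the statement, possibly reindexing so that the nonzero entries $\Hom(T_i, T_j)$ with $i < j$ correspond exactly to the nonvanishing $\Hom_{\dB}(U_i, U_j)$. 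With this identification, the hypothesis of Theorem~\ref{t:tensor} reads: $\Hom_{\dgL}(T_i, T_j[r]) = 0$ for $r \neq 0$ whenever $\Hom_{\dB}(U_i, U_j) \neq 0$, i.e.\ precisely whenever $i \leq j$ (after reindexing), which is exactly the hypothesis $\Hom_{\dgL}(T_i, T_j[r]) = 0$ for $1 \leq i < j \leq n$ (the diagonal case $i = j$ holding because each $T_i$ is exceptional).

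Having checked the hypotheses, Theorem~\ref{t:tensor} immediately gives that the complex $\bigoplus_i (T_i \ten_k U_i)$ is tilting in $\cD(\gL \ten_k kA_n) = \cD(\tgL{n})$, establishing the derived equivalence. It then remains to identify its endomorphism ring with the displayed matrix algebra. By the formula~\eqref{e:endo} the $(i,j)$ entry is $\Hom_{\dgL}(T_j, T_i) \ten_k \Hom_{\dB}(U_j, U_i)$. Since $\Hom_{\dB}(U_j, U_i)$ is either $k$ (concentrated in the triangular positions) or $0$, tensoring over $k$ leaves $\Hom_{\dgL}(T_j, T_i)$ unchanged in the nonzero positions and kills the others, so the matrix reduces to the purely $\gL$-theoretic lower-triangular array $\bigl(\Hom_{\dgL}(T_j, T_i)\bigr)$ displayed in the theorem; I would check that the multiplication maps induced by~\eqref{e:endo} reduce to the stated compositions.

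The main obstacle I anticipate is purely bookkeeping rather than conceptual: getting the index conventions consistent so that the ``componentwise'' tensor matrix of Theorem~\ref{t:tensor} collapses to exactly the displayed lower-triangular form, with the correct orientation of $\Hom(T_i,T_j)$ versus $\Hom(U_i,U_j)$ and the correct identification of $kA_n$ with upper-triangular matrices. Once the projective modules $U_i$ are pinned down and their $\Hom$-pattern is matched to the poset, every hypothesis of Theorem~\ref{t:tensor} is verified and the conclusion transfers directly; there is no further homological input needed beyond the exceptionality of each individual $T_i$.
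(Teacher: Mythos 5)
Your proposal is correct and is essentially the paper's own proof: the paper applies Theorem~\ref{t:tensor} with $A=\gL$, $B=\bZ A_n$ and $U_i = P_{n+1-i}$, which is exactly the reindexing of the indecomposable projectives you anticipate needing so that $\Hom_{\dB}(U_i,U_j)\neq 0$ precisely for $i\leq j$. The remaining identification of the endomorphism ring via~\eqref{e:endo} proceeds just as you describe.
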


\begin{remark}
When $n=2$, Theorems~\ref{t:ade} and~\ref{t:endo} coincide, and in view
of the equivalent Corollary~\ref{c:itertilt}, they state that when
$\gL$ is a ring and $T$ is a tilting $\gL$-module, the two rings
\[
\begin{pmatrix}
\gL & \gL \\ 0 & \gL
\end{pmatrix}
\quad \text{and} \quad
\begin{pmatrix}
\gL & 0 \\ {_{\End T}T_{\gL}} & \End T
\end{pmatrix}
\]
are derived equivalent. This is a special case of
Theorem~4.5 in~\cite{Ladkani08}.
\end{remark}

If $T$ is a tilting complex for a ring $\gL$, then its endomorphism ring
is derived equivalent to $\gL = \tgL{1}$. The next corollary shows that
more generally, under certain compatibility conditions,
if $T_1, \dots, T_n$ are tilting complexes, then the endomorphism ring of
their sum is derived equivalent to $\tgL{n}$.

\begin{cor} \label{c:endoSum}
Let $\gL$ be a ring and
let $T_1, \dots, T_n$ be tilting complexes in $\dgL$ such that:
\begin{enumerate}
\renewcommand{\theenumi}{\roman{enumi}}
\item
$\Hom_{\dgL}(T_i, T_j[r]) = 0$ for all $i<j$ and $r \neq 0$,
\item
\label{i:endoSum}
$\Hom_{\dgL}(T_j, T_i) = 0$ for all $i<j$.
\end{enumerate}
Then $\End_{\dgL}(T_1 \oplus \dots \oplus T_n)$ and $\tgL{n}$
are derived equivalent.
\end{cor}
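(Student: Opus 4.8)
The plan is to deduce Corollary~\ref{c:endoSum} directly from Theorem~\ref{t:endo}, the point being that the extra hypothesis~(\ref{i:endoSum}) collapses the triangular matrix ring of Theorem~\ref{t:endo} onto the honest endomorphism ring of the direct sum $T = T_1 \oplus \dots \oplus T_n$. First I would observe that the hypotheses of Theorem~\ref{t:endo} are satisfied verbatim, since condition~(i) here is identical to the vanishing required there. Hence the triangular matrix ring whose $(i,j)$-entry is $\Hom_{\dgL}(T_j, T_i)$ for $i \geq j$ and $0$ for $i < j$ is already known to be derived equivalent to $\tgL{n}$.

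The remaining task is to identify that triangular matrix ring with $\End_{\dgL}(T)$. Decomposing $T = \bigoplus_i T_i$ gives the standard block description
\[
\End_{\dgL}(T) = \bigoplus_{i,j} \Hom_{\dgL}(T_j, T_i),
\]
viewed as an $n \times n$ matrix ring whose $(i,j)$-block is $\Hom_{\dgL}(T_j, T_i)$, with multiplication given by composition. The key step is that hypothesis~(\ref{i:endoSum}), namely $\Hom_{\dgL}(T_j, T_i) = 0$ whenever $i < j$, forces every block strictly above the diagonal to vanish, so this matrix ring is lower triangular with exactly the entries appearing in Theorem~\ref{t:endo}. I would also note that the composition maps defining the multiplication in $\End_{\dgL}(T)$ coincide with those in the matrix ring of Theorem~\ref{t:endo}, so the identification is an isomorphism of rings, not merely of underlying modules.

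Combining the two steps yields $\End_{\dgL}(T) \cong \tgL{n}$-derived-equivalent matrix ring, and derived equivalence is transitive, so $\End_{\dgL}(T_1 \oplus \dots \oplus T_n)$ is derived equivalent to $\tgL{n}$, as claimed. I do not expect a genuine obstacle here; this is essentially a bookkeeping argument. The only point requiring a small amount of care is verifying that the ring structure (and not just the additive structure) matches, i.e.\ that the vanishing of the upper blocks is compatible with composition so that the lower-triangular shape is preserved as an algebra; but this is immediate since a product of composable morphisms factoring through the zero blocks is itself zero.
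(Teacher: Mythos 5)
Your proposal is correct and follows essentially the same route as the paper: both identify $\End_{\dgL}(T_1 \oplus \dots \oplus T_n)$ with the $n \times n$ matrix ring of blocks $\Hom_{\dgL}(T_j, T_i)$, use hypothesis~(\ref{i:endoSum}) to kill the blocks above the diagonal, and then invoke Theorem~\ref{t:endo}. Nothing is missing.
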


We deduce the following result concerning endomorphism algebras arising
from an auto-equivalence of the derived category. Denote by $\per \gL$
the full subcategory of perfect complexes in $\dgL$, which consists of
all the complexes that are isomorphic in $\dgL$ to bounded ones with
finitely generated projective terms.

\begin{cor} \label{c:endoF}
Let $\gL$ be a ring and $F : \per \gL \xrightarrow{\sim} \per \gL$ an
auto-equivalence. Let $e_1 < e_2 < \dots < e_n$ be an increasing
sequence of integers and denote by $\Delta = \left\{ e_j - e_i \,:\, 1
\leq i < j \leq n \right\}$ the set of its (positive) differences.
Assume that for any $d \in \Delta$,
\begin{enumerate}
\renewcommand{\theenumi}{\roman{enumi}}
\item
\label{i:endoFnz} $\h^r(F^d \gL) = 0$ for all $r \neq 0$;

\item
\label{i:endoFz} $\h^0(F^{-d} \gL) = 0$.
\end{enumerate}
Then $\End_{\dgL}(F^{e_1}\gL \oplus F^{e_2}\gL \oplus \dots \oplus
F^{e_n}\gL)$ is derived equivalent to $\tgL{n}$.
\end{cor}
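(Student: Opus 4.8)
The plan is to deduce Corollary~\ref{c:endoF} directly from Corollary~\ref{c:endoSum} by setting $T_i = F^{e_i}\gL$ and verifying the two hypotheses (i) and (ii) of that corollary. First I would observe that each $T_i = F^{e_i}\gL$ is a tilting complex for $\gL$: indeed $\gL$ is a tilting complex for itself (it is the free module, hence an exceptional compact generator of $\per\gL$), and applying the auto-equivalence $F$ any number of times—positive or negative—preserves all three defining properties, since an equivalence of triangulated categories preserves $\Hom$-vanishing, compactness, and generation. Thus $F^{e_i}\gL \in \per\gL$ is again a tilting complex for every $i$.

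Next I would translate the vanishing conditions. For $i < j$ write $d = e_j - e_i \in \Delta$. Using that $F$ is an equivalence, one has the isomorphism
\[
\Hom_{\dgL}(T_i, T_j[r]) = \Hom_{\dgL}(F^{e_i}\gL, F^{e_j}\gL[r])
\cong \Hom_{\dgL}(\gL, F^{d}\gL[r]) \cong \h^r(F^d\gL),
\]
where the last step is the standard identification of $\Hom_{\dgL}(\gL, X[r])$ with the cohomology $\h^r(X)$ for $X \in \per\gL$ (using that $\gL$ represents the degree-zero cohomology functor on its own derived category). Hence hypothesis~(\ref{i:endoFnz}), namely $\h^r(F^d\gL) = 0$ for all $r \neq 0$, is exactly condition~(i) of Corollary~\ref{c:endoSum}. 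Similarly, applying $F^{-e_j}$ and using the same cohomological identification,
\[
\Hom_{\dgL}(T_j, T_i) = \Hom_{\dgL}(F^{e_j}\gL, F^{e_i}\gL)
\cong \Hom_{\dgL}(\gL, F^{-d}\gL) \cong \h^0(F^{-d}\gL),
\]
so hypothesis~(\ref{i:endoFz}) gives precisely condition~(\ref{i:endoSum}) of Corollary~\ref{c:endoSum}.

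With both hypotheses of Corollary~\ref{c:endoSum} verified, I would invoke it to conclude that $\End_{\dgL}(T_1 \oplus \dots \oplus T_n) = \End_{\dgL}(F^{e_1}\gL \oplus \dots \oplus F^{e_n}\gL)$ is derived equivalent to $\tgL{n}$, which is the claim. The argument is thus essentially a reduction, and the only genuinely substantive point—the main obstacle, such as it is—is pinning down the cohomological identification $\Hom_{\dgL}(\gL, X[r]) \cong \h^r(X)$ and confirming that $F$ may be applied as a shift-commuting equivalence to rewrite each $\Hom$-group with one argument equal to $\gL$. Everything else is formal bookkeeping about how the index differences $e_j - e_i$ range exactly over $\Delta$, so that conditions (i) and (ii) of the present corollary supply the full set of vanishing requirements demanded by Corollary~\ref{c:endoSum}.
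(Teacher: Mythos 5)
Your proposal is correct and follows essentially the same route as the paper: set $T_i = F^{e_i}\gL$, use the equivalence $F$ to identify $\Hom_{\dgL}(T_i, T_j[r])$ with $\h^r(F^{e_j-e_i}\gL)$, and then invoke Corollary~\ref{c:endoSum}. The only difference is that you spell out in more detail why each $F^{e_i}\gL$ is a tilting complex, which the paper leaves implicit.
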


In particular, for any sequence of $n$ consecutive integers, the
conditions~(\ref{i:endoFnz}) and~(\ref{i:endoFz}) need to be checked
for $d = 0, 1, \dots, n-1$.

\subsection{Fractionally Calabi-Yau algebras}

Throughout this section, $k$ denotes a field. Let $\cT$ be a
triangulated $k$-category with finite dimensional $\Hom$-sets. Recall
that a \emph{Serre functor} on $\cT$ is an auto-equivalence $\nu : \cT
\to \cT$ with bifunctorial isomorphisms
\[
\Hom_{\cT}(X, Y) \xrightarrow{\simeq} D\Hom_{\cT}(Y, \nu X)
\]
for $X, Y \in \cT$, see~\cite{BondalKapranov89}. 

By a \emph{fraction} we mean a pair $(d,e)$ of integers with $e \geq 1$.
By abuse of notation we write a fraction also in the traditional way as
$\frac{d}{e}$, but one should keep in mind that the common factors
cannot always be canceled.
We say that a triangulated $k$-category $\cT$ with shift functor $[1]$
and Serre functor $\nu$ is \emph{fractionally Calabi-Yau of dimension
$\frac{d}{e}$} (or \emph{$\frac{d}{e}$-CY} in short) if
$\nu^e \simeq [d]$, see~\cite{Kontsevich98}. Obviously, being
$\frac{d}{e}$-CY implies being $\frac{\ell d}{\ell e}$-CY for any $\ell
\geq 1$.

For a finite dimensional algebra $\gL$ over $k$, denote by $\modf \gL$
the category of finite dimensional right $\gL$-modules and by
$\cD^b(\modf \gL)$ its bounded derived category. We say that $\gL$ is
\emph{fractionally CY} if $\cD^b(\modf \gL)$ is. An interesting class
of such algebras is provided by~\cite[Theorem~4.1]{MiyachiYekutieli01},
namely the path algebra of any Dynkin quiver is $\frac{h-2}{h}$-CY,
where $h$ is the Coxeter number of the corresponding Dynkin diagram. In
particular, $kA_n$ is $\frac{n-1}{n+1}$-CY. More generally, a
connection between the fractionally CY property and $n$-representation
finiteness is outlined in the recent paper~\cite{HerschendIyama09}.

Fractionally CY algebras behave well with respect to tensor products.
Indeed, define the sum of two fractions $\frac{d_1}{e_1}$ and
$\frac{d_2}{e_2}$ as the fraction $\frac{d}{e}$, where $e$ is the least
common multiple of $e_1, e_2$ and $d$ is set such that $\frac{d}{e} =
\frac{d_1}{e_1} + \frac{d_2}{e_2}$ as rational numbers.
Now, if $A$ is $\alpha$-CY and $B$ is $\beta$-CY, then $A \ten_k B$
is $(\alpha+\beta)$-CY, provided that it has finite global dimension.
This happens, for example, when the field $k$ is perfect, or when $k$
is arbitrary and $B = kA_n$. Thus, starting with two such algebras,
Theorem~\ref{t:tensor} can be used to construct many new fractionally
CY algebras, all of CY-dimension $\alpha+\beta$, which are not
necessarily tensor products of algebras. A particular case is the
following:

\begin{cor} \label{c:fracCY}
Let $\gL$ be $\lambda$-CY. Then $\tgL{n}$ is
$(\lambda+\frac{n-1}{n+1})$-CY, hence the algebras in
Theorem~\ref{t:ade}, Corollaries~\ref{c:itertilt}
(and in particular~\ref{c:replica}),
\ref{c:endoSum} and~\ref{c:endoF} are all $(\lambda + \frac{n-1}{n+1})$-CY.
\end{cor}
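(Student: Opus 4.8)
The plan is to reduce the statement to two facts that are already on the table: the behaviour of the fractionally Calabi--Yau property under tensor products, recalled immediately above, and its invariance under derived equivalence. Once these are in place, the corollary follows by first verifying the claim for $\tgL{n}$ itself and then transporting it along the derived equivalences established earlier.

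First I would treat $\tgL{n}$ directly. Writing $\tgL{n} = \gL \ten_k kA_n$, I note that $\gL$ is $\lambda$-CY by hypothesis and that $kA_n$ is $\frac{n-1}{n+1}$-CY (the Dynkin case $A_n$, with Coxeter number $h = n+1$, so that $\frac{h-2}{h} = \frac{n-1}{n+1}$). Since $B = kA_n$ is exactly the situation in which the tensor product $A \ten_k B$ is guaranteed to have finite global dimension over an arbitrary field $k$ --- alternatively, $\tgL{n}$ is a triangular matrix algebra over the finite-global-dimension algebra $\gL$ and hence has finite global dimension --- the tensor-product principle applies with $A = \gL$, $B = kA_n$ and yields that $\tgL{n}$ is $(\lambda + \frac{n-1}{n+1})$-CY.

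Next I would establish that, for a finite dimensional algebra of finite global dimension, the fractional CY-dimension is a derived invariant. The key point is that such an algebra's bounded derived category carries a Serre functor $\nu \simeq - \tenl_{\gL} D\gL$, and a Serre functor is unique up to natural isomorphism whenever it exists. Thus, if $\Phi : \cD^b(\modf \gL) \xrightarrow{\sim} \cD^b(\modf \gL')$ is a triangle equivalence, then $\Phi \nu_{\gL} \Phi^{-1}$ is again a Serre functor on $\cD^b(\modf \gL')$ and is therefore isomorphic to $\nu_{\gL'}$; combining this with the fact that $\Phi$ commutes with the shift $[1]$, one gets $\nu_{\gL'}^{e} \simeq [d]$ if and only if $\nu_{\gL}^{e} \simeq [d]$, so the CY-dimension is preserved. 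The main thing to be careful about here is precisely that the Serre functor must exist on both sides: this is the one genuine subtlety, and it is handled by observing that finiteness of global dimension (equivalently, the identification $\cD^b(\modf \gL) = \per \gL$) is itself a derived invariant, so $\gL'$ automatically lies in the same class and the uniqueness argument runs symmetrically.

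Finally I would invoke the derived equivalences proved earlier. By Theorem~\ref{t:ade} and Corollaries~\ref{c:itertilt}, \ref{c:replica}, \ref{c:endoSum} and~\ref{c:endoF}, each of the listed algebras is derived equivalent to $\tgL{n}$, and each is finite dimensional over $k$ because the relevant $\Hom$-spaces between perfect complexes over the finite dimensional algebra $\gL$ are finite dimensional. By the invariance of the preceding step they inherit the CY-dimension $\lambda + \frac{n-1}{n+1}$ of $\tgL{n}$, which is the assertion. In short, the only step carrying real content is the derived invariance of the CY-dimension; the other two ingredients are quoted verbatim from the surrounding discussion.
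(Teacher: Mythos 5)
Your proposal is correct and follows exactly the route the paper intends: the paper states this corollary as an immediate consequence of the preceding paragraph (that $kA_n$ is $\frac{n-1}{n+1}$-CY by the Coxeter-number formula and that the fractional CY-dimension is additive under tensor product when $B=kA_n$), together with the fact that the CY property, being defined purely in terms of the triangulated category $\cD^b(\modf\gL)$ and its Serre functor, is invariant under derived equivalence. Your extra care about the existence and uniqueness of the Serre functor on both sides is a reasonable elaboration of a step the paper leaves implicit, and nothing in your argument deviates from the paper's approach.
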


The next corollary is an immediate consequence of
Corollary~\ref{c:rectline}. For $m=2$, it provides another explanation
for the CY-dimensions computed in~\cite{Lenzing10}.

\begin{cor}
The line $A(n \cdot m, m+1)$ is $(\frac{m-1}{m+1} + \frac{n-1}{n+1})$-CY.
\end{cor}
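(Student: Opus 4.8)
The plan is to derive this final corollary as a direct combination of two facts already in hand: Corollary~\ref{c:rectline}, which gives a derived equivalence between the line $A(n \cdot m, m+1)$ and the rectangle $kA_m \ten_k kA_n$, and the tensorial additivity of the fractional Calabi-Yau property stated in the paragraph preceding Corollary~\ref{c:fracCY}. Since the fractional CY-dimension of $\cD^b(\modf \gL)$ is an invariant of the triangulated category and hence is preserved under derived equivalence, it suffices to compute the CY-dimension of the rectangle $kA_m \ten_k kA_n$.

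First I would invoke the result of Miyachi and Yekutieli cited in the excerpt, by which $kA_m$ is $\frac{m-1}{m+1}$-CY and $kA_n$ is $\frac{n-1}{n+1}$-CY. Next I would apply the tensor-product rule: if $A$ is $\alpha$-CY and $B$ is $\beta$-CY, then $A \ten_k B$ is $(\alpha+\beta)$-CY, provided $A\ten_k B$ has finite global dimension. Here the caveat is automatically met, because the excerpt explicitly notes that finite global dimension of the tensor product is guaranteed when one factor is $kA_n$; the rectangle $kA_m \ten_k kA_n$ therefore has finite global dimension. Thus the rectangle is $\bigl(\frac{m-1}{m+1} + \frac{n-1}{n+1}\bigr)$-CY.

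Finally, Corollary~\ref{c:rectline} asserts that $A(n \cdot m, m+1)$ is derived equivalent to $kA_m \ten_k kA_n$ (noting $A(m\cdot n, m+1) = A(n\cdot m, m+1)$). Since a fractionally CY structure of dimension $\frac{d}{e}$ is the relation $\nu^e \simeq [d]$ on the Serre functor $\nu$ and the shift $[1]$, both of which are intrinsic to the triangulated structure, this property transports along the equivalence. Consequently $A(n \cdot m, m+1)$ is $\bigl(\frac{m-1}{m+1} + \frac{n-1}{n+1}\bigr)$-CY, as claimed.

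I do not expect any genuine obstacle here, as the statement is a corollary assembled from ingredients already established. The only point deserving a word of care is the invariance of the fractional CY-dimension under derived equivalence: a derived equivalence commutes with the shift functor and, being a $k$-linear triangle equivalence between categories with finite-dimensional $\Hom$-spaces and a Serre functor, intertwines the respective Serre functors up to natural isomorphism; hence it carries the relation $\nu^e \simeq [d]$ from one category to the other and preserves the value $\frac{d}{e}$. This is the single step I would state explicitly rather than treat as routine.
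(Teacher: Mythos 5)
Your argument is correct and is exactly the one the paper intends: the paper dismisses this corollary as "an immediate consequence of Corollary~\ref{c:rectline}", relying implicitly on the same two ingredients you spell out (the $\frac{n-1}{n+1}$-CY property of $kA_n$ with the additivity of CY-dimension under tensor product, and the invariance of the fractional CY property under derived equivalence). Your explicit remark on why the relation $\nu^e \simeq [d]$ transports along a derived equivalence is a reasonable elaboration of a step the paper leaves unstated, not a departure from its route.
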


\subsection{Path algebras of quivers and Auslander algebras}

The results of Section~\ref{ssec:endo} can be applied in particular for
preprojective components of path algebras of quivers, a setting which
we now recall, see e.g.~\cite{Ringel80}.
Let $k$ denote an algebraically closed field throughout
this section. Let $Q$ be a (finite) quiver without oriented cycles with
set of vertices $Q_0$, and denote by $kQ$ the path algebra of $Q$. The
indecomposable projectives of $kQ$ are in one-to-one correspondence with
the vertices $x \in Q_0$, and we denote them by $\{P_x\}_{x \in Q_0}$.

The category $\cD^b(\modf kQ)$ admits a Serre functor $\nu$ given by
$\nu = - \tenl_{kQ} D(kQ)$, see~\cite{Happel88}. The Auslander-Reiten
translation $\tau$ on $\modf kQ$ can be written as
$\tau = H^0 \circ \nu[-1]$, giving a bijection between the
non-projective and non-injective indecomposables of $\modf kQ$, with inverse
$\tau^- = H^0 \circ \nu^-[1]$ where $\nu^-$ is the inverse of $\nu$.

\begin{cor} \label{c:preproj}
Let $r \geq 0$ such that $\tau^{-r} P_x \neq 0$ for all $x \in Q_0$.
Then the algebras
\[
\End_{kQ}\left(kQ \oplus \tau^{-1} kQ \oplus \dots \oplus \tau^{-r} kQ \right)
\quad \text{and} \quad
kQ \ten_k kA_{r+1} = \cT_{r+1}(kQ)
\]
are derived equivalent.
\end{cor}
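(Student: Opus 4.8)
The plan is to apply Corollary~\ref{c:endoSum} with $\gL = kQ$ and the tilting complexes $T_i = \tau^{-(i-1)} kQ$ for $i = 1, \dots, r+1$. The two bullet hypotheses of that corollary, translated to our situation, require that $\Hom_{\dgL}(\tau^{-i} kQ, \tau^{-j} kQ[s]) = 0$ for $i < j$ and all $s \neq 0$, and that $\Hom_{\dgL}(\tau^{-j} kQ, \tau^{-i} kQ) = 0$ for $i < j$. So the first thing I would do is verify that each $\tau^{-i} kQ$ is genuinely a tilting complex: since $kQ$ is a projective generator and $\nu = - \tenl_{kQ} D(kQ)$ is an auto-equivalence of $\cD^b(\modf kQ)$, each power $\nu^{-i}$ applied to $kQ$ stays a tilting complex, and under the stated hypothesis $\tau^{-r} P_x \neq 0$ for all $x$ the relevant $H^0$ computations place $\tau^{-i}kQ$ in degree $0$ as an actual tilting module, so it coincides with $\nu^{-i} kQ$ up to the identification via $\tau^- = H^0 \circ \nu^-[1]$.

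Next I would reduce both vanishing conditions to statements about a single auto-equivalence, so as to invoke Corollary~\ref{c:endoF} rather than Corollary~\ref{c:endoSum} directly; indeed Corollary~\ref{c:endoF} is tailored for exactly the family $F^{e_1}\gL, \dots, F^{e_n}\gL$. Taking $F = \nu^{-1}$ (the inverse Serre functor on $\per kQ = \cD^b(\modf kQ)$), $e_i = i-1$, and $n = r+1$, the set of positive differences is $\Delta = \{1, 2, \dots, r\}$. Condition~(\ref{i:endoFnz}) then reads $\h^s(\nu^{-d} kQ) = 0$ for all $s \neq 0$ and all $1 \leq d \leq r$, and condition~(\ref{i:endoFz}) reads $\h^0(\nu^{d} kQ) = 0$ for $1 \leq d \leq r$. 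The first says that $\nu^{-d} kQ$ is concentrated in degree $0$, i.e.\ that $\tau^{-d} kQ$ is a module (with no projective-summand obstruction), which is precisely guaranteed by the hypothesis $\tau^{-r} P_x \neq 0$ for every vertex $x$: this ensures that no indecomposable summand of $kQ$ becomes injective before the $r$-th application of $\tau^-$, so applying $\nu^{-d}$ stays inside $\modf kQ$ in degree $0$. The second condition says $\nu^{d} kQ$ has no cohomology in degree $0$; since $\nu^{d} kQ = \tau^{d}$-related data shifted, and $kQ$ is projective, $\nu^d kQ$ for $d \geq 1$ lands in strictly negative degrees (it is built from injectives shifted down), giving the required $\h^0 = 0$.

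With both conditions of Corollary~\ref{c:endoF} verified, that corollary immediately yields that $\End_{\dgL}\!\big(kQ \oplus \nu^{-1}kQ \oplus \dots \oplus \nu^{-r}kQ\big)$ is derived equivalent to $\tgL{r+1} = \cT_{r+1}(kQ)$. Finally I would translate $\nu^{-i} kQ$ back to $\tau^{-i} kQ$ using $\tau^- = \h^0 \circ \nu^-[1]$ together with the degree-$0$ concentration just established, so that the endomorphism algebra computed in the derived category agrees with the endomorphism algebra $\End_{kQ}(kQ \oplus \tau^{-1}kQ \oplus \dots \oplus \tau^{-r}kQ)$ computed in $\modf kQ$; and I would identify $\cT_{r+1}(kQ)$ with $kQ \ten_k kA_{r+1}$ via the description of $\tgL{n}$ as upper triangular matrices given in Section~\ref{ssec:ade}. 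The main obstacle I anticipate is the careful bookkeeping in this last translation step: one must confirm that the $\Hom$-spaces $\Hom_{\dgL}(\tau^{-i}kQ, \tau^{-j}kQ)$ computed in the derived category really coincide with the module-level homomorphisms and carry the same composition (matrix multiplication) structure, since a mismatch in degrees or a hidden extension could alter the resulting algebra even when the objects are concentrated in degree $0$.
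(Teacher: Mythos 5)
Your overall strategy is the paper's: reduce to Corollary~\ref{c:endoF} with the sequence $0,1,\dots,r$ and an autoequivalence built from the Serre functor. But there is a genuine error in the choice of that autoequivalence. You take $F=\nu^{-1}$, the inverse Serre functor itself, whereas the paper takes $F=\nu^{-}[1]$, and the shift is not optional. Since $\tau^{-}=\h^0\circ\nu^{-}[1]$, the object $\nu^{-d}kQ$ is (when no summand dies) isomorphic to $\tau^{-d}kQ[-d]$, i.e.\ it is concentrated in cohomological degree $d$, not degree $0$; so condition~(i) of Corollary~\ref{c:endoF}, which with your $F$ reads $\h^s(\nu^{-d}kQ)=0$ for $s\neq 0$, fails for every $d\geq 1$. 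Your claim that ``$\nu^{-d}kQ$ is concentrated in degree $0$, i.e.\ $\tau^{-d}kQ$ is a module'' conflates $\nu^{-d}$ with $(\nu^{-}[1])^d$. Dually, condition~(ii) with your $F$ asks $\h^0(\nu^{d}kQ)=0$; this already fails at $d=1$, because $\nu(kQ)=D(kQ)$ is a module concentrated in degree $0$, contrary to your assertion that $\nu^d kQ$ lands in strictly negative degrees for all $d\geq 1$. Finally, even if one ignored the hypotheses, the conclusion of Corollary~\ref{c:endoF} for your $F$ concerns $\End_{\dgL}\bigl(\bigoplus_i \nu^{-i}kQ\bigr)$, whose off-diagonal entries are $\Hom_{\dgL}(\tau^{-i}kQ,\tau^{-j}kQ[i-j])$ --- extension groups rather than the module homomorphisms appearing in the statement --- so the promised ``translation back to $\tau^{-i}kQ$'' cannot work without the shift.

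The repair is exactly the paper's one line: set $F=\nu^{-}[1]$. Then $F^i(kQ)=\tau^{-i}kQ$ is concentrated in degree $0$ for $0\leq i\leq r$ by the hypothesis $\tau^{-r}P_x\neq 0$ (this part of your argument is fine), and $\h^0(F^{-i}kQ)=\tau^{i}kQ=0$ for $i\geq 1$ since $kQ$ is projective and $\tau$ annihilates projectives. With this $F$ the objects $F^{e_i}\gL$ are literally the modules $\tau^{-(i-1)}kQ$ sitting in degree $0$, the derived $\Hom$'s in degree $0$ agree with module homomorphisms together with their compositions, and Corollary~\ref{c:endoF} gives the stated derived equivalence with $\cT_{r+1}(kQ)=kQ\ten_k kA_{r+1}$.
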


For a finite dimensional $k$-algebra $\gL$ with a finite number of
isomorphism classes of indecomposables in $\modf \gL$
(i.e.\ $\gL$ is of \emph{finite representation type}),
the \emph{Auslander algebra} of $\gL$ is defined as
$\Aus(\gL) = \End_{\gL}(M)$ where $M$ is the sum of the (non-isomorphic)
indecomposable $\gL$-modules, see~\cite{Auslander71}.

By Gabriel~\cite{Gabriel72}, the quivers whose path algebra is of
finite representation type are precisely those whose underlying graph
is a Dynkin diagram of type $A$, $D$ or $E$. For such a quiver, let
\[
r_x = \max \left\{ r \geq 0 \,:\, \tau^{-r}P_x \neq 0 \right\}
\]
denote the size of the $\tau^{-}$-orbit of $P_x$ in $\modf kQ$.
Following~\cite{HerschendIyama09}, we call the algebra $kQ$
\emph{homogeneous} if $r_x = r$ does not depend on $x$.

By considering the Auslander-Reiten quiver of $kQ$, as given
in~\cite[\S 6.5]{Gabriel80} or~\cite[\S 2.2]{Ringel80},
it is known that $kQ$ is homogeneous if and only if the orientation is
invariant under the automorphism of the underlying Dynkin diagram as drawn in
Figure~\ref{fig:autDynkin}.
Namely, for $A_n$ and $E_6$ it is the reflection
around the vertical axis at the middle,
for $D_{2n+1}$ it is the reflection around the horizontal axis,
and for the other diagrams it is the identity,
see also~\cite[Prop~3.2]{HerschendIyama09}.
These diagram automorphisms appear also in the theory of finite
twisted groups of Lie type, see e.g.~\cite[\S 4.4]{CarterSegalMacdonald95}.
We call such invariant orientations \emph{symmetric}. Note that there are no
symmetric orientations on the diagram $A_{2n}$ and any orientation on
$D_{2n}$, $E_7$ and $E_8$ is symmetric.

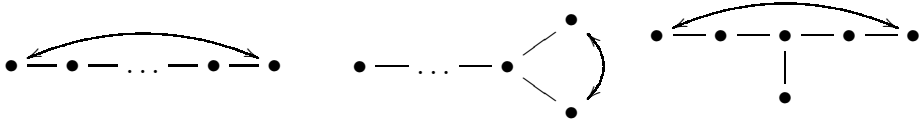
\begin{figure}
\begin{align*}
\begin{array}{c}
\xymatrix@=0.9pc{
{\bullet} \ar@{-}[r] \ar@/^1pc/@{<->}[rrrr]
& {\bullet} \ar@{-}[r] & {\ldots} \ar@{-}[r] & {\bullet} \ar@{-}[r] &
{\bullet}
}
\end{array}
& &
\begin{array}{c}
\xymatrix@C=1pc@R=0.5pc{
& & & {\bullet} \ar@/^1pc/@{<->}[dd] \\
{\bullet} \ar@{-}[r] & {\ldots} \ar@{-}[r] &
{\bullet} \ar@{-}[ur] \ar@{-}[dr] \\
& & & {\bullet}
}
\end{array}
& &
\begin{array}{c}
\xymatrix@=1pc{
{\bullet} \ar@{-}[r] \ar@/^1pc/@{<->}[rrrr] & {\bullet} \ar@{-}[r] &
{\bullet} \ar@{-}[r] \ar@{-}[d] & {\bullet} \ar@{-}[r] & {\bullet} \\
& & {\bullet}
}
\end{array}
\end{align*}
\caption{Automorphisms of the Dynkin diagrams $A_n$, $D_{2n+1}$ and $E_6$.}
\label{fig:autDynkin}
\end{figure}

\begin{cor} \label{c:Aus}
Let $Q$ be a Dynkin quiver with a symmetric orientation.
Then the Auslander algebra $\Aus(kQ)$ is derived equivalent to an
incidence algebra of a poset as indicated in the following table,
\begin{center}
\begin{tabular}{cccr@{$=$}l}
& & Derived type of & \multicolumn{2}{c}{} \\
Diagram & Orientation & Auslander algebra & 
\multicolumn{2}{c}{CY-dimension} \\ \hline
$A_{2n+1}$ & symmetric & $A_{2n+1} \times A_{n+1}$ &
$\frac{2^\eps n(2n+3)}{2^\eps (n+1)(n+2)}$ & 
$\frac{2n}{2n+2} + \frac{n}{n+2}$ \vspace{3pt} \\
$D_{2n}$   & any       & $D_{2n} \times A_{2n-1}$ &
$\frac{(2n-2)(4n-1)}{(2n-1)2n}$ &
$\frac{2n-2}{2n-1} + \frac{2n-2}{2n}$ \vspace{3pt} \\
$D_{2n+1}$ & symmetric & $D_{2n+1} \times A_{2n}$ &
$\frac{(4n-2)(4n+1)}{4n(2n+1)}$ &
$\frac{4n-2}{4n} + \frac{2n-1}{2n+1}$ \vspace{3pt} \\
$E_6$      & symmetric & $E_6 \times A_6$ &
$\frac{130}{84}$ & $\frac{10}{12} + \frac{5}{7}$ \vspace{3pt} \\
$E_7$      & any       & $E_7 \times A_9$ &
$\frac{152}{90}$ & $\frac{8}{9} + \frac{8}{10}$ \vspace{3pt} \\
$E_8$      & any       & $E_8 \times A_{15}$ &
$\frac{434}{240}$ & $\frac{14}{15} + \frac{14}{16}$
\end{tabular}
\end{center}
where in the first row $\eps$ is $0$ when $n$ is even and $1$ otherwise.
In particular, all these Auslander algebras are fractionally Calabi-Yau.
\end{cor}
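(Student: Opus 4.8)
The plan is to reduce everything to Corollary~\ref{c:preproj} by recognizing the Auslander algebra as the endomorphism ring of a sum of $\tau^{-}$-shifts of $kQ$. Recall that for a Dynkin quiver $Q$ the whole category $\modf kQ$ coincides with the preprojective component, so every indecomposable has the form $\tau^{-i}P_x$ for a unique $x \in Q_0$ and $0 \leq i \leq r_x$, and these are pairwise non-isomorphic. When $Q$ carries a symmetric orientation it is homogeneous, that is $r_x = r$ is independent of $x$; hence the direct sum $M$ of all indecomposables equals $kQ \oplus \tau^{-1}kQ \oplus \dots \oplus \tau^{-r}kQ$, each summand occurring exactly once, and therefore $\Aus(kQ) = \End_{kQ}(M)$. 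Since $\tau^{-r}P_x \neq 0$ for every $x$, the hypothesis of Corollary~\ref{c:preproj} holds, so $\Aus(kQ)$ is derived equivalent to $\cT_{r+1}(kQ) = kQ \ten_k kA_{r+1}$.

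Next I would pin down $r$ and rewrite the answer as an incidence algebra. Homogeneity forces all $|Q_0|$ of the $\tau$-orbits to share the common length $r+1$; comparing with the total number of indecomposables, which equals the number of positive roots $\tfrac{|Q_0|\,h}{2}$ (here $h$ is the Coxeter number), gives $r+1 = h/2$, i.e.\ $r = (h-2)/2$. Substituting the Coxeter numbers $h = 2n+2,\, 4n-2,\, 4n,\, 12,\, 18,\, 30$ for $A_{2n+1}, D_{2n}, D_{2n+1}, E_6, E_7, E_8$ produces the factors $A_{n+1}, A_{2n-1}, A_{2n}, A_6, A_9, A_{15}$ in the table. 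Because a Dynkin diagram is a tree, between comparable vertices there is a unique directed path and every arrow is a cover, so $kQ$ is the incidence algebra of the poset on $Q_0$ defined by reachability, with Hasse diagram $Q$; consequently $kQ \ten_k kA_{r+1}$ is the incidence algebra of the product poset $Q \times A_{r+1}$, which justifies the ``derived type'' column.

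Finally, the Calabi--Yau dimensions follow from Corollary~\ref{c:fracCY}. The path algebra $kQ$ of a Dynkin quiver with Coxeter number $h$ is $\tfrac{h-2}{h}$-CY by~\cite{MiyachiYekutieli01}, so $\cT_{r+1}(kQ)$ is $\bigl(\tfrac{h-2}{h} + \tfrac{r}{r+2}\bigr)$-CY. Using $r = (h-2)/2$ one gets $\tfrac{r}{r+2} = \tfrac{h-2}{h+2}$, which is precisely the CY-dimension $\tfrac{(r+1)-1}{(r+1)+1}$ of the second tensor factor $kA_{r+1}$; thus the listed dimension is simply the sum of the CY-dimensions of the two factors, and a case-by-case substitution reproduces the entries in the last column.

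The main obstacle is the bookkeeping of the first two steps rather than any new idea: one must be certain that for a \emph{symmetric} orientation every $\tau$-orbit really has the same length, so that truncating the preprojective component at level $r$ captures each indecomposable exactly once and no $\tau^{-i}P_x$ is prematurely zero or repeated. This is exactly the input provided by the Auslander--Reiten quiver analysis of~\cite{Gabriel80,Ringel80} together with the homogeneity criterion of~\cite{HerschendIyama09}; granting it, the remainder is a direct invocation of Corollary~\ref{c:preproj} and an arithmetic identification of $r$ with the Coxeter numbers.
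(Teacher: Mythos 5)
Your proposal is correct and follows essentially the same route as the paper: homogeneity of $kQ$ for a symmetric orientation identifies $\Aus(kQ)$ with $\End_{kQ}(kQ \oplus \tau^{-1}kQ \oplus \dots \oplus \tau^{-r}kQ)$, and Corollary~\ref{c:preproj} then gives the derived equivalence with $kQ \ten_k kA_{r+1}$. The only difference is cosmetic: where the paper takes the orbit length $r$ from the tables in~\cite{Gabriel80}, you derive $r+1 = h/2$ by counting positive roots, which is a clean self-contained way to obtain the same numerical input and matches all entries of the table.
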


One should keep in mind that while the derived category $\cD^b(\modf
kQ)$ is independent on the orientation of $Q$, this is no longer true
for the derived category $\cD^b(\modf \Aus(kQ))$. Indeed, even in the
simplest example of the diagram $A_3$, the Auslander algebra
corresponding to the linear orientation is derived equivalent to
$kD_6$, while that of the same diagram but with alternating orientation
(which is symmetric) is derived equivalent to $kE_6$ (or equivalently,
$kA_2 \ten_k kA_3$). The quivers with relations of the two Auslander
algebras are shown in Figure~\ref{fig:ausA3}, where we used white dots
to indicate the vertices corresponding to the indecomposable projective
$kQ$-modules.

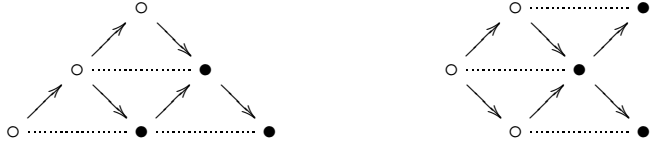
\begin{figure}
\begin{align*}
\xymatrix@=1pc{
& & {\circ} \ar[dr] \\
& {\circ} \ar[ur] \ar[dr] \ar@{.}[rr] & & {\bullet} \ar[dr] \\
{\circ} \ar[ur] \ar@{.}[rr] & & {\bullet} \ar[ur] \ar@{.}[rr]
& & {\bullet}
}
& &
\xymatrix@=1pc{
& {\circ} \ar[dr] \ar@{.}[rr] & & {\bullet} \\
{\circ} \ar[ur] \ar[dr] \ar@{.}[rr] & & {\bullet} \ar[ur] \ar[dr] \\
& {\circ} \ar[ur] \ar@{.}[rr] & & {\bullet}
}
\end{align*}
\caption{Quivers with relations of the Auslander algebras of the
diagram $A_3$ with linear (left) and bipartite (right) orientations.}
\label{fig:ausA3}
\end{figure}

A nicer picture is obtained if, instead of considering $\Aus(kQ)$, one
considers the \emph{stable} Auslander algebra, which in our case can be
defined as $\sAus(kQ) = \End(M)$, where $M$ is the sum of all
indecomposable non-projective $kQ$-modules. Looking again at
Figure~\ref{fig:ausA3}, we see that while the Auslander algebras are
not derived equivalent, when restricting to the stable part (the black
dots) we get two derived equivalent algebras of type $A_3$.

This holds in general, as shown by the theorem below, which follows
from recent results of~\cite{IyamaOppermann09b}, see
also~\cite[Prop.~A.2]{GLS07}. An alternative approach will be presented
in the forthcoming paper~\cite{Ladkani09b}.
\begin{thm*}
Let $Q$ and $Q'$ be two orientations of a Dynkin diagram. Then $\sAus(kQ)$
and $\sAus(kQ')$ are derived equivalent.
\end{thm*}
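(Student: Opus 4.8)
The plan is to reduce the statement to a single reflection and then carry out the comparison inside one fixed triangulated category, where changing the orientation becomes a single mutation. Since a Dynkin diagram is a tree, any two orientations $Q$ and $Q'$ are linked by a finite chain of Bernstein--Gelfand--Ponomarev reflections at sinks and sources; as derived equivalence is transitive and symmetric, it suffices to treat one reflection. Moreover, reflecting at a sink is the inverse operation of reflecting at a source, so I may assume that $Q'$ arises from $Q$ by reflecting at a single \emph{source} $i$ of $Q$. This is the favourable case: the simple module $S_i$ at a source is injective rather than projective, so passing to $Q'$ disturbs the projective--injective bookkeeping as little as possible.

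Next I would work inside the common derived category $\cD = \cD^b(\modf kQ) \simeq \cD^b(\modf kQ')$, whose indecomposables form the standard mesh category on the repetition quiver $\bZ\Delta$ with translation $\tau = \nu[-1]$. Because $\Hom$-spaces of modules coincide with morphisms of degree $0$ in $\cD$, each stable Auslander algebra is realized as $\End_{\cD}(\bigoplus X)$ with $X$ ranging over the non-projective indecomposable modules of the relevant orientation. The heart of this step is to show that, under the identification of the two derived categories, the two sets of non-projective indecomposables differ by exactly one object: the reflection deletes $S_i$ and inserts its Auslander--Reiten translate, while every other non-projective indecomposable is common to both orientations. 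The sample computations for $A_2$ and $A_3$, where the symmetric difference has size two, already exhibit this behaviour.

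Finally I would convert this single change of object into a derived equivalence. The configuration of non-projective indecomposables enjoys, after an appropriate ordering, the kind of $\Hom$-orthogonality exploited in Theorem~\ref{t:endo} and Corollary~\ref{c:endoSum}, and mutating the single distinguished summand is an instance of the (2-)APR tilting of Iyama--Oppermann \cite{IyamaOppermann09b} (see also \cite[Prop.~A.2]{GLS07}): the mutated object should be a tilting complex over $\sAus(kQ)$ whose endomorphism ring is precisely $\sAus(kQ')$. Transporting this tilting complex back along $\End_{\cD}(\bigoplus X) = \sAus(kQ)$ then yields the asserted derived equivalence.

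The step I expect to be the main obstacle is the boundary bookkeeping underlying the last two paragraphs: matching which indecomposables are projective on each side, confirming that a source reflection alters exactly one non-projective indecomposable, and then verifying the tilting axioms (exceptionality and compact generation) together with the identification of the endomorphism ring after mutation. All of this reduces to controlling the finitely many $\Hom$-spaces near the mutated vertex through the mesh relations, and that is where the genuine work lies.
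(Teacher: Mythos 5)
First, a point of comparison: the paper does not prove this statement at all. It is quoted from the literature (Iyama--Oppermann, and Prop.~A.2 of Gei\ss--Leclerc--Schr\"oer), with an alternative proof deferred to a forthcoming paper. So your attempt has to stand on its own, and as written it has a genuine gap at its decisive step. The preliminary reductions are fine: any two orientations are linked by source/sink reflections, and under the reflection equivalence the two collections of non-projective indecomposables do differ in exactly one indecomposable. (One correction there: the object that replaces the simple at the reflected vertex is the indecomposable \emph{projective} at that vertex, not the Auslander--Reiten translate of the simple. For $Q$ the linear $A_3$ and $Q'$ obtained by reflecting at the source, $\sAus(kQ)=\End(S_1\oplus S_2\oplus I_2)$ while $\sAus(kQ')\simeq\End(P_1\oplus S_2\oplus I_2)$; here $\tau S_1=S_2$ is already a common summand.)

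The gap is the last paragraph, and it is not mere bookkeeping. You need that exchanging the single summand $X$ of $M=N\oplus X$ for $Y$ produces a derived equivalence $\End(N\oplus X)\sim\End(N\oplus Y)$, and you propose to realize $N\oplus Y$ as a mutation, i.e.\ via an exchange triangle with middle term in $\add N$. No such triangle exists here. In the $A_3$ example one must exchange $S_1$ against $P_1$ relative to $N=S_2\oplus I_2$: but $\Hom_{kQ}(S_1,N)=0$, so the minimal left $\add N$-approximation of $S_1$ is zero, and the cone of the minimal right approximation $I_2\twoheadrightarrow S_1$ is $S_2[1]$ -- a shift of an existing summand, not $P_1$. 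Moreover the final ``transport'' step presupposes that $\Hom_{\cD}(\bigoplus X,-)$ identifies $\cD^b(\modf kQ)$ with $\cD^b(\modf\sAus(kQ))$, which fails because $\bigoplus X$ is not exceptional (e.g.\ $\Hom_{\cD}(S_1,S_2[1])\neq 0$ from the extension $0\to S_2\to I_2\to S_1\to 0$), hence is not a tilting object. So the step you flag as ``where the genuine work lies'' is in fact the entire content of the theorem, and the mechanism you sketch for it does not work; the cited proofs succeed precisely because they operate in a different ambient category rather than by naive one-summand mutation inside $\cD^b(\modf kQ)$.
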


Using this result, we deduce the following.

\begin{cor} \label{c:sAus}
Let $Q$ be a Dynkin quiver whose underlying graph is not $A_{2n}$.
Then the stable Auslander algebra of $kQ$ is derived equivalent to
an incidence algebra of a poset as indicated in the following table.
\begin{center}
\begin{tabular}{ccr@{$=$}l}
& Derived type of & \multicolumn{2}{c}{} \\
Diagram & stable Auslander algebra & 
\multicolumn{2}{c}{CY-dimension} \\
\hline $A_{2n+1}$ & $A_{2n+1} \times A_n$ &
$\frac{4n-2}{2n+2}$ & $\frac{2n}{2n+2} + \frac{n-1}{n+1}$
\vspace{3pt} \\
$D_{2n}$ & $D_{2n} \times A_{2n-2}$ & $\frac{4n-5}{2n-1}$ &
$\frac{2n-2}{2n-1} + \frac{2n-3}{2n-1}$
\vspace{3pt} \\
$D_{2n+1}$ & $D_{2n+1} \times A_{2n-1}$ & $\frac{8n-6}{4n}$
& $\frac{4n-2}{4n} + \frac{2n-2}{2n}$
\vspace{3pt} \\
$E_6$ & $E_6 \times A_5$ & $\frac{18}{12}$ & $\frac{10}{12} +
\frac{4}{6}$
\vspace{3pt} \\
$E_7$ & $E_7 \times A_8$ & $\frac{15}{9}$ & $\frac{8}{9} + \frac{7}{9}$
\vspace{3pt} \\
$E_8$ & $E_8 \times A_{14}$ & $\frac{27}{15}$ & $\frac{14}{15} +
\frac{13}{15}$
\end{tabular}
\end{center}
In particular, all these stable Auslander algebras are fractionally
Calabi-Yau.
\end{cor}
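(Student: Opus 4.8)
The plan is to reduce, by means of the theorem stated just above, to a convenient orientation, and then to recognize the stable Auslander algebra as the endomorphism ring of a family of preprojective tilting modules to which Corollary~\ref{c:endoSum} applies. Since $\sAus(kQ)$ and $\sAus(kQ')$ are derived equivalent for any two orientations $Q, Q'$ of the fixed Dynkin diagram, I would first replace $Q$ by a \emph{symmetric} orientation, which exists precisely because the underlying graph is not $A_{2n}$. For such an orientation $kQ$ is homogeneous, so every $\tau^-$-orbit of an indecomposable projective has the same length, say $r+1$; counting the indecomposables as (number of positive roots) $=\tfrac{1}{2}\lvert Q_0\rvert\, h$, where $h$ is the Coxeter number, gives $r+1 = h/2$ (in particular $h$ is even, which is exactly what fails for $A_{2n}$).

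For a homogeneous quiver the indecomposable non-projective $kQ$-modules are precisely the $\tau^{-i}P_x$ with $x \in Q_0$ and $1 \leq i \leq r$, so that
\[
\sAus(kQ) \;=\; \End_{kQ}\Bigl(\bigoplus_{i=1}^{r}\tau^{-i}kQ\Bigr).
\]
I would set $\gL = kQ$ and $T_i = \tau^{-i}kQ$ for $1 \leq i \leq r$. Each $T_i$ is a tilting module: it is rigid since $\Hom_{\cD^b(\modf kQ)}(\tau^{-i}kQ,\tau^{-i}kQ[s]) \cong \Hom_{\cD^b(\modf kQ)}(kQ,kQ[s])$ vanishes for $s\neq 0$ (I transport along the inverse Serre functor $\nu^{-1}$, using $\tau^{-i}kQ = \nu^{-i}kQ$ throughout the safe range $i\le r$), and it has the maximal number $\lvert Q_0\rvert$ of indecomposable summands. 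The two hypotheses of Corollary~\ref{c:endoSum} are then checked as follows. For $i<j$ the same transport gives $\Hom(T_i,T_j[s]) \cong \Hom(kQ,\tau^{-(j-i)}kQ[s])$, which is zero for $s\neq 0$ because $kQ$ is projective; this is condition~(i). Condition~(\ref{i:endoSum}), namely $\Hom(T_j,T_i)=0$ for $i<j$, is the vanishing $\Hom_{kQ}(\tau^{-j}kQ,\tau^{-i}kQ)=0$, which I would deduce from the directedness of the Auslander--Reiten quiver: the $\tau^-$-level is non-decreasing along all irreducible maps, so no nonzero homomorphism can drop from level $j$ to a strictly smaller level $i$. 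This argument also covers the boundary layer $i=r$, where $T_r = \tau^{-r}kQ = D(kQ)$ consists of the injectives.

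Corollary~\ref{c:endoSum} then yields that $\sAus(kQ)$ is derived equivalent to $\cT_r(kQ) = kQ \ten_k kA_r$. As the bounded derived category of a Dynkin path algebra does not depend on the orientation, $kQ$ has the derived type of its underlying diagram, so $kQ\ten_k kA_r$ is derived equivalent to the incidence algebra of the product of that diagram with $A_r$; this is the entry in the table. The value $r = h/2-1$ is read off from the Coxeter numbers $h = 2n+2,\,4n-2,\,4n,\,12,\,18,\,30$ for $A_{2n+1}, D_{2n}, D_{2n+1}, E_6, E_7, E_8$, producing the factors $A_n, A_{2n-2}, A_{2n-1}, A_5, A_8, A_{14}$ respectively.

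Finally, for the Calabi--Yau dimension I would invoke Corollary~\ref{c:fracCY}: since $kQ$ is $\tfrac{h-2}{h}$-CY for a Dynkin quiver, $\cT_r(kQ)$ is $\bigl(\tfrac{h-2}{h}+\tfrac{r-1}{r+1}\bigr)$-CY, and substituting $r+1 = h/2$ gives $\tfrac{2h-6}{h}$, matching each entry of the last column (the displayed splitting being exactly the CY-dimensions of $kQ$ and of $kA_r$). I expect the main obstacle to lie not in the formal application of Corollary~\ref{c:endoSum} but in the Auslander--Reiten bookkeeping of the second paragraph: confirming that the chosen orientation is homogeneous, that the non-projective indecomposables are exactly the $\tau^{-i}kQ$ for $1\le i\le r$, and that the directedness argument for condition~(\ref{i:endoSum}) remains valid up to and including the injective layer.
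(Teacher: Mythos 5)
Your proof is correct and follows essentially the same route as the paper: reduce to a symmetric (hence homogeneous) orientation via the orientation-independence theorem, realize $\sAus(kQ)$ as $\End_{kQ}\bigl(\tau^{-1}kQ \oplus \dots \oplus \tau^{-r}kQ\bigr)$, and apply the preprojective machinery to obtain $kQ \ten_k kA_r$ with $r+1 = h/2$. If anything, your direct appeal to Corollary~\ref{c:endoSum} for the summands $\tau^{-1}kQ,\dots,\tau^{-r}kQ$ makes explicit the small adjustment (dropping the $i=0$ layer, so as to land in $\cT_r(kQ)$ rather than $\cT_{r+1}(kQ)$) that the paper leaves implicit in its citation of Corollary~\ref{c:preproj}.
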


Finally, we obtain the following connection between some ``triangles''
and ``rectangles''. An example (for $n=3$) is shown in
Figure~\ref{fig:trirect}.

\begin{figure}[h]
\begin{align*}
\xymatrix@R=1pc@C=0.5pc{
& & & & & {\bullet} \ar[dr] \\
& & & & {\bullet} \ar[ur] \ar[dr] && {\bullet} \ar@{.}[ll] \ar[dr] \\
& & & {\bullet} \ar[ur] \ar[dr] && {\bullet} \ar@{.}[ll] \ar[ur] \ar[dr]
&& {\bullet} \ar@{.}[ll] \ar[dr] \\
& & {\bullet} \ar[ur] \ar[dr] && {\bullet} \ar@{.}[ll] \ar[ur] \ar[dr]
&& {\bullet} \ar@{.}[ll] \ar[ur] \ar[dr] && {\bullet} \ar@{.}[ll] \ar[dr] \\
& {\bullet} \ar[ur] \ar[dr] && {\bullet} \ar@{.}[ll] \ar[ur] \ar[dr]
&& {\bullet} \ar@{.}[ll] \ar[ur] \ar[dr] && {\bullet} \ar@{.}[ll] \ar[ur] \ar[dr]
&& {\bullet} \ar@{.}[ll] \ar[dr] \\
{\bullet} \ar[ur] && {\bullet} \ar@{.}[ll] \ar[ur]
&& {\bullet} \ar@{.}[ll] \ar[ur] && {\bullet} \ar@{.}[ll] \ar[ur]
&& {\bullet} \ar@{.}[ll] \ar[ur] && {\bullet} \ar@{.}[ll]
}
& &
\xymatrix@R=1pc@C=0.5pc{
& & & {\bullet} \ar[dr] && {\bullet} \ar@{.}[ll] \ar[dr]
&& {\bullet} \ar@{.}[ll] \\
& & {\bullet} \ar[ur] \ar[dr] && {\bullet} \ar@{.}[ll] \ar[ur] \ar[dr]
&& {\bullet} \ar@{.}[ll] \ar[ur] \\
& {\bullet} \ar[ur] \ar[dr] && {\bullet} \ar@{.}[ll] \ar[ur] \ar[dr]
&& {\bullet} \ar@{.}[ll] \ar[ur] \\
{\bullet} \ar[ur] \ar[dr] && {\bullet} \ar@{.}[ll] \ar[ur] \ar[dr]
&& {\bullet} \ar@{.}[ll] \ar[ur] \ar[dr] \\
& {\bullet} \ar[ur] \ar[dr] && {\bullet} \ar@{.}[ll] \ar[ur] \ar[dr]
&& {\bullet} \ar@{.}[ll] \ar[dr] \\
& & {\bullet} \ar[ur] \ar[dr] && {\bullet} \ar@{.}[ll] \ar[ur] \ar[dr]
&& {\bullet} \ar@{.}[ll] \ar[dr] \\
& & & {\bullet} \ar[ur] && {\bullet} \ar@{.}[ll] \ar[ur]
&& {\bullet} \ar@{.}[ll]
}
\\ \\
\xymatrix@R=1pc@C=0.5pc{
{\bullet} \ar[dr] && {\bullet} \ar[ll] \ar[dr] &&
{\bullet} \ar[ll] \ar[dr] \\
& {\bullet} \ar@{.}[ur] \ar[dr]
&& {\bullet} \ar[ll] \ar@{.}[ur] \ar[dr] &&
{\bullet} \ar[ll] \ar[dr] \\
& & {\bullet} \ar@{.}[ur] \ar[dr]
&& {\bullet} \ar[ll] \ar@{.}[ur] \ar[dr] &&
{\bullet} \ar[ll] \ar[dr] \\
& & & {\bullet} \ar@{.}[ur] \ar[dr]
&& {\bullet} \ar[ll] \ar@{.}[ur] \ar[dr] &&
{\bullet} \ar[ll] \ar[dr] \\
& & & & {\bullet} \ar@{.}[ur] \ar[dr]
&& {\bullet} \ar[ll] \ar@{.}[ur] \ar[dr] &&
{\bullet} \ar[ll] \ar[dr] \\
& & & & & {\bullet} \ar@{.}[ur] \ar[dr]
&& {\bullet} \ar[ll] \ar@{.}[ur] \ar[dr] &&
{\bullet} \ar[ll] \ar[dr] \\
& & & & & & {\bullet} \ar@{.}[ur] && {\bullet} \ar[ll] \ar@{.}[ur]
&& {\bullet} \ar[ll]
}
& &
\xymatrix@R=1pc@C=0.5pc{
& & & {\bullet} \ar@{.}[dr] && {\bullet} \ar[ll] \ar@{.}[dr]
&& {\bullet} \ar[ll] \\
& & {\bullet} \ar[ur] \ar@{.}[dr] && {\bullet} \ar[ll] \ar[ur] \ar@{.}[dr]
&& {\bullet} \ar[ll] \ar[ur] \\
& {\bullet} \ar[ur] \ar@{.}[dr] && {\bullet} \ar[ll] \ar[ur] \ar@{.}[dr]
&& {\bullet} \ar[ll] \ar[ur] \\
{\bullet} \ar[ur] \ar[dr] && {\bullet} \ar[ll] \ar[ur] \ar[dr]
&& {\bullet} \ar[ll] \ar[ur] \ar[dr] \\
& {\bullet} \ar@{.}[ur] \ar[dr] && {\bullet} \ar[ll] \ar@{.}[ur] \ar[dr]
&& {\bullet} \ar[ll] \ar[dr] \\
& & {\bullet} \ar@{.}[ur] \ar[dr] && {\bullet} \ar[ll] \ar@{.}[ur] \ar[dr]
&& {\bullet} \ar[ll] \ar[dr] \\
& & & {\bullet} \ar@{.}[ur] && {\bullet} \ar[ll] \ar@{.}[ur]
&& {\bullet} \ar[ll]
}
\end{align*}
\caption{Quivers with relations of four derived equivalent algebras.
Starting at the upper left and going clockwise: the triangle
$\sAus(kA_7)$ with linear orientation on $A_7$; $\sAus(kA_7)$ with a
symmetric orientation on $A_7$; $kA_7 \ten_k kA_3$ with a symmetric
orientation (on $A_7$); and the rectangle $kA_7 \ten_k kA_3$ (with
linear orientation).} \label{fig:trirect}
\end{figure}
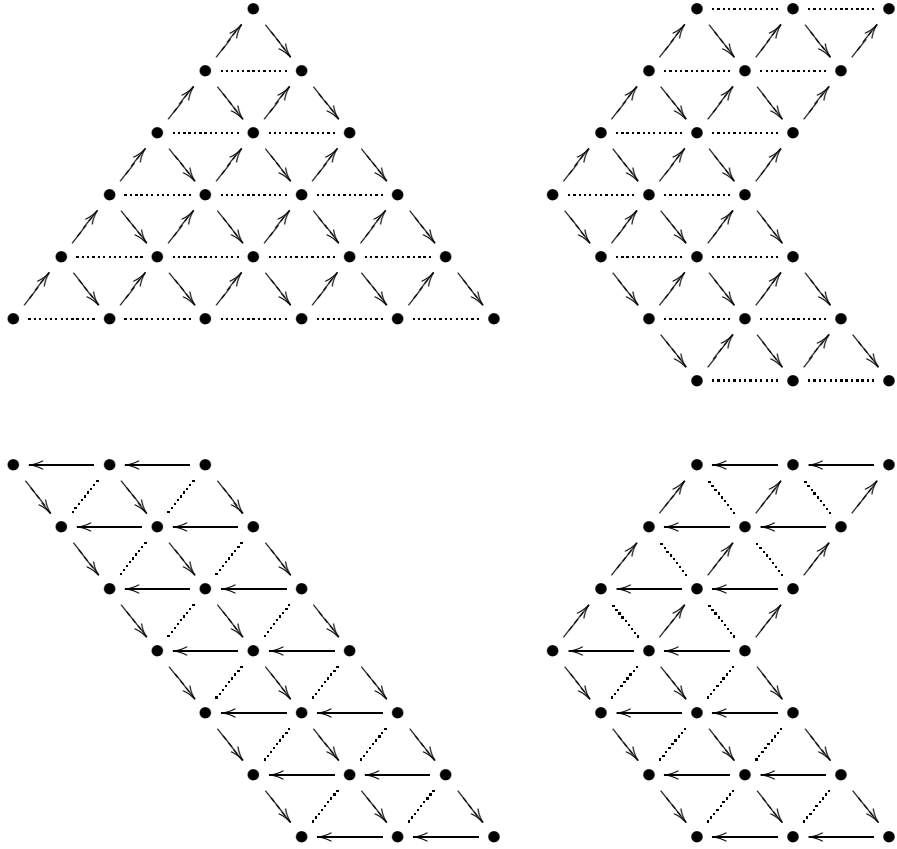

\begin{cor} \label{c:rectriang}
The Auslander algebra corresponding to the linear orientation on $A_{2n}$
is derived equivalent to $kA_{2n+1} \ten_k kA_n$.
\end{cor}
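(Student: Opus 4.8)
The plan is to reduce Corollary~\ref{c:rectriang} to the $A_{2n+1}$-row of Corollary~\ref{c:sAus} by proving the purely algebraic identification
\[
\Aus(kA_{2n})\cong\sAus(kA_{2n+1})
\]
for the linear orientations, and then quoting that corollary, which asserts that $\sAus(kA_{2n+1})$ is derived equivalent to the incidence algebra of $A_{2n+1}\times A_n$, that is, to $kA_{2n+1}\ten_k kA_n$. The appeal of this route is that the even diagram $A_{2n}$ carries no symmetric orientation, so Corollary~\ref{c:Aus} does not apply directly; passing to the odd diagram $A_{2n+1}$ and to its \emph{stable} Auslander algebra is exactly the device that repairs this.

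First I would set up the combinatorics. For the orientation $1\to 2\to\dots\to m$, the indecomposable $kA_m$-modules are the interval modules $M[a,b]$ with $1\le a\le b\le m$, the indecomposable projectives are $P_i=M[i,m]$, and one has the standard rule that $\Hom(M[a,b],M[c,d])$ is one-dimensional precisely when $c\le a\le d\le b$ and is zero otherwise. Viewing $kA_{2n}$ as the quotient $kA_{2n+1}/\langle e_{2n+1}\rangle$, a $kA_{2n}$-module is the same thing as a $kA_{2n+1}$-module with no support at the sink $2n+1$; under the interval description these are exactly the $M[a,b]$ with $b\le 2n$. Since the indecomposable projectives $P_i=M[i,2n+1]$ are precisely the intervals reaching $2n+1$, the indecomposable $kA_{2n}$-modules coincide with the \emph{non-projective} indecomposable $kA_{2n+1}$-modules, and inflation along the quotient map identifies $\Hom_{kA_{2n}}(M,N)$ with $\Hom_{kA_{2n+1}}(M,N)$ for any two of them. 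Consequently $\Aus(kA_{2n})=\End_{kA_{2n+1}}\big(\bigoplus_{N\text{ non-proj}}N\big)$, the \emph{ordinary} endomorphism algebra of the non-projectives computed inside $kA_{2n+1}$.

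The crux, and the step I expect to require the most care, is to verify that this ordinary endomorphism algebra is already the stable one, i.e.\ that no homomorphism between non-projective indecomposables factors through a projective module. Here the special feature of the linear orientation enters decisively: every indecomposable projective $M[i,2n+1]$ has its socle at the sink $2n+1$, whereas every non-projective $M[a,b]$ satisfies $b\le 2n$, so by the interval rule $\Hom(M[a,b],M[i,2n+1])\neq 0$ would force $2n+1\le b$, which is impossible. Thus $\Hom_{kA_{2n+1}}(N,P)=0$ for every non-projective $N$ and every projective $P$, every composite $N\to P\to N'$ vanishes, and the maps factoring through projectives form the zero ideal. Hence $\End_{kA_{2n+1}}\big(\bigoplus N\big)=\underline{\End}_{kA_{2n+1}}\big(\bigoplus N\big)=\sAus(kA_{2n+1})$, which gives the desired isomorphism $\Aus(kA_{2n})\cong\sAus(kA_{2n+1})$; combining this with Corollary~\ref{c:sAus} then yields the derived equivalence between $\Aus(kA_{2n})$ and $kA_{2n+1}\ten_k kA_n$. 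It is precisely this vanishing of $\Hom(\text{non-proj},\text{proj})$, valid for the linear orientation, that both makes the argument go through and explains why the even case falls outside the scope of the symmetric-orientation results.
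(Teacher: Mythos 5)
Your proposal is correct and follows exactly the paper's route: the paper's proof consists of the single assertion that $\Aus(kA_{2n})$ (linear orientation) is isomorphic to $\sAus(kA_{2n+1})$ (linear orientation), followed by an appeal to Corollary~\ref{c:sAus}. You simply supply the details of that isomorphism — the identification of $\modf kA_{2n}$ with the non-projective part of $\modf kA_{2n+1}$ via the interval modules, and the vanishing of $\Hom(\text{non-proj},\text{proj})$ which makes the ordinary and stable endomorphism algebras agree — all of which are accurate.
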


\begin{remark}
It turns out that the fractions appearing in Corollaries~\ref{c:Aus}
and~\ref{c:sAus} as the CY-dimensions of the corresponding (stable)
Auslander algebras are the best possible, i.e.\ common factors
cannot be canceled.
This can be seen for example by considering the Coxeter polynomials of
the two Dynkin quivers comprising the derived type
(see~\cite[\S1.1]{LenzingdelaPena08}) and examining the orders of
products of pairs of roots, one from each polynomial.
\end{remark}

\section{Proof of Theorem~\protect{\ref{t:tensor}}}
\label{sec:prooftensor}

In this section we prove Theorem~\ref{t:tensor}, using calculations
involving tensor products of complexes. For the convenience of the
reader, we give a rather detailed account, see
also~\cite[Chapter~6]{KoenigZimmermann98} for a similar treatment.

Let $k$ be a commutative ring and let $A$ be a $k$-algebra. Our
conventions are that modules are right modules and $\ten$ (without
subscript) will always mean tensor product over $k$. Denote by $\Mod A$
the category of all (right) $A$-modules, and by $\Proj A$ and $\proj A$
the full subcategories of projective and finitely-generated projective
modules, respectively.

Let $B$ be another $k$-algebra. If $M$ is an $A$-module and $N$ is a
$B$-module, then $M \ten N$ is a module over the $k$-algebra $A \ten
B$.

\begin{lemma} \label{l:PQ}
Let $P \in \proj A$ and $Q \in \proj B$.
Then
\begin{enumerate}
\renewcommand{\theenumi}{\alph{enumi}}
\item
$P \ten Q \in \proj(A \ten B)$.
\item
For any $M \in \Mod A$ and $N \in \Mod B$, we have a natural
isomorphism
\[
\Hom_A(P, M) \ten \Hom_B(Q, N) \to \Hom_{A \ten B}(P \ten Q, M \ten N) .
\]
\end{enumerate}
\end{lemma}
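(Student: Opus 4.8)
The plan is to prove both parts by reducing from arbitrary finitely generated projectives to the free modules $A$ and $B$, where everything is transparent, and then transporting the conclusions along idempotents.

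For part (a), I would use that $P$ is a direct summand of some $A^m$ and $Q$ a direct summand of some $B^n$. Since $\ten$ distributes over finite direct sums, there is an isomorphism $A^m \ten B^n \cong (A \ten B)^{mn}$ of right $(A \ten B)$-modules; one checks directly that the evident $k$-linear isomorphism intertwines the right actions, using the formula $(x \ten y)(a \ten b) = xa \ten yb$. Writing $P \oplus P' \cong A^m$ and $Q \oplus Q' \cong B^n$ and expanding the tensor product into its four summands, $P \ten Q$ appears as a direct summand of $(A \ten B)^{mn}$, hence lies in $\proj(A \ten B)$. I note that no flatness hypothesis on $B$ over $k$ enters this argument.

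For part (b), I would first check that the displayed assignment $f \ten g \mapsto (p \ten q \mapsto f(p) \ten g(q))$ is well defined and $(A \ten B)$-linear; this follows immediately from the $A$-linearity of $f$, the $B$-linearity of $g$, and the formula for the action of $A \ten B$. I would then regard both sides as additive functors of the pair $(P,Q)$, contravariant in each variable, and regard the displayed assignment as a natural transformation between them. The essential feature is that both functors carry finite direct sums (in either variable) to finite direct sums, and that the natural transformation respects this splitting.

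The crux is therefore to verify the map on the single pair $(P,Q) = (A,B)$. There the evaluation isomorphisms $\Hom_A(A,M) \cong M$ and $\Hom_B(B,N) \cong N$ identify the left-hand side with $M \ten N$, while $\Hom_{A \ten B}(A \ten B, M \ten N) \cong M \ten N$ identifies the right-hand side likewise, and under these identifications the comparison map becomes the identity. By additivity and naturality it is then an isomorphism on every pair $(A^m, B^n)$. Finally, presenting $P$ and $Q$ as images of idempotents $e$ on $A^m$ and $e'$ on $B^n$, the map for $(P,Q)$ is the restriction of the isomorphism for $(A^m, B^n)$ to the images of the induced idempotents on the two sides, which naturality forces to correspond; hence it is an isomorphism as well. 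The main obstacle I anticipate is purely bookkeeping: keeping the several module structures and the variance of the $\Hom$-functors straight, so that naturality of the comparison map is genuinely verified rather than taken for granted.
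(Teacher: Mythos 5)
Your proposal is correct and follows essentially the same route as the paper, whose entire proof reads ``Both claims are true for $P=A$ and $Q=B$, and pass to finite sums and direct summands''; you have simply written out the reduction via idempotents and the verification on the free pair $(A,B)$ in full detail. Nothing is missing and no different idea is introduced.
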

\begin{proof}
Both claims are true for $P=A$ and $Q=B$, and pass to finite sums and direct
summands.
\end{proof}

For an additive category $\cA$, denote by $\cC(\cA)$ the category of
complexes over $\cA$, by $\cC^{-}(\cA)$ its full
subcategory consisting of complexes bounded above and by $\cC^b(\cA)$ the
subcategory of bounded complexes. We abbreviate $\cC(\Mod A)$,
$\cC^-(\Mod A)$, $\cC^b(\Mod A)$ by $\cC(A)$, $\cC^-(A)$, $\cC^b(A)$ and
denote by $\dA$, $\cD^-(A)$, $\cD^b(A)$ the corresponding derived categories.

Recall~\cite[\S V.9]{MacLane63} that if $K=(K^p)$ and $L=(L^q)$
are complexes of $k$-modules, their
\emph{tensor product} $K \ten L$ is the complex whose terms are
\[
(K \ten L)^n = \bigoplus_{p+q=n} K^p \ten L^q
\]
with the differential defined on each piece $K^p \ten L^q$ by
\[
d_{K \ten L}(x \ten y) = d_K(x) \ten y + (-1)^p x \ten d_L(y) .
\]

If $X \in \cC(A)$ and $Y \in \cC(B)$, then $X \ten Y \in
\cC(A \ten B)$. When $Y \in \cC^-(B)$ and the terms of $Y$ are
projective as $k$-modules, the functor
$- \ten Y : \cC(A) \to \cC(A \ten B)$ is exact, hence induces
a triangulated functor from $\dA$ to $\dAB$,
which restricts to $\cD^-(A) \to \cD^-(A \ten B)$.

If $T$ and $X$ are complexes of $A$-modules, the complex
$\Hom^{\bullet}_A(T, X)$ is defined by
\begin{equation} \label{e:totHom}
\Hom^{\bullet}_A(T, X)^n = \prod_{p \in \bZ} \Hom_A(T^p, X^{n+p}),
\end{equation}
with the differential taking $f=(f^p:T^p \to X^{n+p})_{p \in \bZ}$ to
$df$, whose $p$-th component is
\[
(df)^p = d_X^{n+p} f^p - (-1)^n f^{p+1} d_T^p .
\]
We have $\h^r \bigl(\Hom^{\bullet}_A(T,X)\bigr) \simeq \Hom_{\cK(A)}(T, X[r])$
where $\cK(A)$ is the homotopy category of complexes.

\begin{lemma} \label{l:PQcomplex}
Let $P \in \cC^b(\proj A)$, $Q \in \cC^b(\proj B)$
and $X \in \cC^-(\Mod A)$, $Y \in \cC^-(\Mod B)$.
Then the natural map
\begin{equation} \label{e:PQcomplex}
\Hom^{\bullet}_A(P, X) \ten \Hom^{\bullet}_B(Q, Y) \to
\Hom^{\bullet}_{A \ten B}(P \ten Q, X \ten Y)
\end{equation}
is an isomorphism.
\end{lemma}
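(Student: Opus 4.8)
The plan is to prove Lemma~\ref{l:PQcomplex} by reducing it to the term-by-term statement of Lemma~\ref{l:PQ}(b), which already handles the case of single projective modules, and then tracking how the tensor-product and total-$\Hom$ constructions interact. First I would unwind both sides of~\eqref{e:PQcomplex} in degree $n$. By the definition~\eqref{e:totHom} of the total $\Hom$-complex and the definition of the tensor product of complexes, the left-hand side in degree $n$ is
\[
\bigoplus_{a+b=n} \left( \prod_{p} \Hom_A(P^p, X^{p+a}) \right) \ten
\left( \prod_{q} \Hom_B(Q^q, Y^{q+b}) \right),
\]
while the right-hand side in degree $n$ is
\[
\prod_{s} \Hom_{A \ten B}\bigl( (P \ten Q)^s, (X \ten Y)^{s+n} \bigr)
= \prod_{s} \Hom_{A \ten B}\Bigl( \bigoplus_{p+q=s} P^p \ten Q^q,
\bigoplus_{p'+q'=s+n} X^{p'} \ten Y^{q'} \Bigr).
\]
The natural map in question is the one assembled from the isomorphisms of Lemma~\ref{l:PQ}(b) on each factor $\Hom_A(P^p, X^{p'}) \ten \Hom_B(Q^q, Y^{q'}) \to \Hom_{A \ten B}(P^p \ten Q^q, X^{p'} \ten Y^{q'})$.

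The key point that makes this manageable is finiteness: since $P \in \cC^b(\proj A)$ and $Q \in \cC^b(\proj B)$ are bounded complexes of finitely generated projectives, each of the products over $p$, over $q$, and over $s$ reduces to a finite sum, and each individual $\Hom_A(P^p, X^{p'})$ is a finite sum of shifted copies of summands of the $X^{p'}$. I would emphasize that this is exactly what lets tensor products commute past the (now finite) products and direct sums involved, so that the degree-$n$ component of the left-hand side rearranges into the degree-$n$ component of the right-hand side. Concretely, after replacing products by sums one matches the index pair $(a,b)$ with $n = a+b$ and the splitting $s = p+q$, $s+n = p'+q'$, so that $p' - p = a$ and $q' - q = b$; the bijection of indexing data together with the termwise isomorphisms of Lemma~\ref{l:PQ}(b) yields an isomorphism of graded $k$-modules.

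It then remains to verify that this graded isomorphism is compatible with the differentials, i.e.\ that the map~\eqref{e:PQcomplex} is a chain map. This is where I expect the only real bookkeeping to lie, and it is the step I would flag as the main obstacle: the differential on a total $\Hom$-complex carries Koszul signs $(-1)^n$, and the differential on a tensor product of complexes carries signs $(-1)^p$, so one must check that the two sign conventions are consistent under the identification. The verification is a direct, if tedious, comparison: apply the differential of $\Hom^{\bullet}_{A\ten B}(P \ten Q, X \ten Y)$ to an element coming from $f \ten g$ with $f$ of degree $a$ and $g$ of degree $b$, expand using $d_{X \ten Y}$ and $d_{P \ten Q}$, and check that the resulting signs match those produced by $d(f \ten g) = (df)\ten g + (-1)^a f \ten (dg)$ on the left. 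I would relegate this sign-chase to a remark that the identities hold by the standard Koszul-sign conventions, rather than writing out every term, since the content is entirely formal once the graded isomorphism is in place.
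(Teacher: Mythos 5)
Your overall strategy is exactly the paper's: unwind the degree-$n$ terms of both sides, use boundedness of $P$ and $Q$ to replace the products in~\eqref{e:totHom} by finite direct sums, apply Lemma~\ref{l:PQ}(b) factor by factor to match the graded pieces, and then check compatibility with the differentials. Up to and including the identification of graded $k$-modules, your argument is complete and agrees with the paper's.

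The gap is precisely in the step you chose to ``relegate to a remark.'' The map assembled from Lemma~\ref{l:PQ}(b) with no sign adjustment is \emph{not} a chain map, so the assertion that the signs work out ``by the standard Koszul-sign conventions'' cannot be waved through: it is false for the map as you have described it. Concretely, for $f \in \Hom_A(P^i, X^{p+i})$ and $g \in \Hom_B(Q^j, Y^{q+j})$, the term involving $d_Y$ appears with sign $(-1)^{p}$ on the left-hand side (from $d(f \ten g) = df \ten g + (-1)^{p} f \ten dg$), but with sign $(-1)^{p+i}$ on the right-hand side (since $d_{X \ten Y}$ acts on $X^{p+i} \ten Y^{q+j}$ as $d_X \ten 1 + (-1)^{p+i}\, 1 \ten d_Y$); these disagree whenever $i$ is odd. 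The paper resolves this by twisting the termwise identification to $f \ten g \mapsto (-1)^{iq} f \ten g$, and verifies that this single twist reconciles all four terms of the two differentials. Your described verification would surface the discrepancy as soon as you carried it out, but as written the proposal asserts a chain-map property that the untwisted map does not have; producing the correct sign $(-1)^{iq}$ (or building an equivalent convention into the definition of the ``natural map'') is the one genuinely non-formal point of the lemma and cannot be omitted.
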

\begin{proof}
Since $P$, $Q$ are bounded and $X$, $Y$ are bounded above, the
complexes in both sides of~\eqref{e:PQcomplex} are bounded above, and
the product in~\eqref{e:totHom} can be replaced by a direct sum.

The $n$-th term of the complex in the left hand side
of~\eqref{e:PQcomplex} is the sum
\[
\bigoplus_{i, j, p, q=n-p} \Hom_A(P^i, X^{p+i}) \ten \Hom_B(Q^j,
Y^{q+j})
\]
with the differential
\[
d(f \ten g) = d_X f \ten g - (-1)^p f d_P \ten g + (-1)^p f \ten d_Y g
- (-1)^{p+q} f \ten g d_Q
\]
for $f \in \Hom_A(P^i, X^{p+i})$, $g \in \Hom_B(Q^j, Y^{q+j})$.

Similarly, the $n$-th term of the complex in the right hand side
of~\eqref{e:PQcomplex} is the sum
\[
\bigoplus_{i, j, p, q=n-p} \Hom_A(P^i \ten Q^j, X^{p+i} \ten Y^{q+j})
\]
with the differential
\begin{align*}
d(f \ten g) &= d_{X \ten Y}(f \ten g) - (-1)^{p+q}(f \ten g)d_{P \ten
Q} \\
&= d_X f \ten g +(-1)^{p+i} f \ten d_Y g - (-1)^{p+q} \bigl( f d_P \ten
g + (-1)^i f \ten g d_Q \bigr)
\end{align*}
for $f \ten g \in \Hom_{A \ten B}(P^i \ten Q^j, X^{p+i} \ten Y^{q+j})$.

The two complexes are now isomorphic by using the isomorphism of
Lemma~\ref{l:PQ} and mapping $f \ten g \mapsto (-1)^{iq} f \ten g$,
where $f \in \Hom_A(P^i, X^{p+i})$ and $g \in \Hom_B(Q^j, Y^{q+j})$.
\end{proof}

\begin{lemma}
\label{l:KLqis}
Let $K \in \cC^-(\Mod k)$, and let $g : L' \to L$ be a map of complexes in
$\cC^-(\Proj k)$ which is a quasi-isomorphism. Then
\[
1_K \ten g : K \ten L' \to K \ten L
\]
is also a quasi-isomorphism.
\end{lemma}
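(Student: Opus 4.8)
The plan is to reduce the claim that $1_K \ten g$ is a quasi-isomorphism to the acyclicity of a single tensor product, and then to observe that the complex in question is in fact contractible. First I would pass to the mapping cone $\operatorname{Cone}(g)$, whose terms are $\operatorname{Cone}(g)^n = (L')^{n+1} \oplus L^n$. Since $L'$ and $L$ lie in $\cC^-(\Proj k)$, this cone again belongs to $\cC^-(\Proj k)$, and since $g$ is a quasi-isomorphism it is acyclic. There is a natural isomorphism of complexes $K \ten \operatorname{Cone}(g) \cong \operatorname{Cone}(1_K \ten g)$ (the functor $K \ten -$ commutes with mapping cones, up to the usual sign conventions), so $1_K \ten g$ is a quasi-isomorphism precisely when $K \ten \operatorname{Cone}(g)$ is acyclic. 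The lemma is thus reduced to the following statement: for $K \in \cC^-(\Mod k)$ and any acyclic $C \in \cC^-(\Proj k)$, the complex $K \ten C$ is acyclic.

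The key step is that a bounded-above acyclic complex $C$ of projective $k$-modules is contractible. I would construct a contracting homotopy $h = (h^n \colon C^n \to C^{n-1})$ by downward induction. If $C^n = 0$ for $n > N$, then acyclicity in degree $N$ makes $d^{N-1} \colon C^{N-1} \to C^N$ surjective onto a projective module, so it splits and yields $h^N$ with $d^{N-1} h^N = 1_{C^N}$. At a general degree, assuming $h^{n+1}$ already satisfies the homotopy identity, a short calculation shows that $1_{C^n} - h^{n+1} d^n$ maps $C^n$ into $\ker d^n = \operatorname{im} d^{n-1}$ (here acyclicity is used); projectivity of $C^n$ then lets me lift this map through the surjection $d^{n-1} \colon C^{n-1} \to \operatorname{im} d^{n-1}$, producing $h^n$ with $d^{n-1} h^n + h^{n+1} d^n = 1_{C^n}$. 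Boundedness above guarantees the induction starts and is well defined.

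It remains to check that $K \ten -$ carries contractible complexes to contractible ones. Given a contracting homotopy $h$ for $C$, I would verify that $H(x \ten y) = (-1)^{|x|} x \ten h(y)$ is a contracting homotopy for $K \ten C$: using $d_{K \ten C}(x \ten y) = d_K x \ten y + (-1)^{|x|} x \ten d_C y$, a direct computation gives $d_{K \ten C} H + H d_{K \ten C} = 1_{K \ten C}$, the sign $(-1)^{|x|}$ being exactly what makes the two cross terms cancel. Because $K$ and $C$ are bounded above, each $(K \ten C)^n = \bigoplus_{p+q=n} K^p \ten C^q$ is a finite direct sum, so $H$ is well defined termwise. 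A contractible complex is acyclic, which completes the reduction and proves the lemma.

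The main obstacle is bookkeeping rather than conceptual: getting the signs right both in the isomorphism $K \ten \operatorname{Cone}(g) \cong \operatorname{Cone}(1_K \ten g)$ and in confirming that $H$ is a genuine homotopy. The one place where the hypotheses are genuinely used — projectivity of the terms of $L'$ and $L$, together with boundedness above — is the contractibility of bounded-above acyclic projective complexes; once that is established, everything else is formal.
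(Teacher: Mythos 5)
Your argument is correct, but it follows a genuinely different route from the paper. The paper's proof chooses a projective resolution $f : K' \to K$ with $K' \in \cC^-(\Proj k)$ and runs a two-out-of-three argument in the commutative square formed by $f \ten 1$ and $1 \ten g$: the vertical maps are quasi-isomorphisms because $- \ten L'$ and $- \ten L$ are exact, and the top horizontal map $1_{K'} \ten g$ is a quasi-isomorphism because $K'$ is a bounded-above complex of projectives; the bottom map is then forced to be one as well. You instead pass to $\operatorname{Cone}(g)$, prove directly that a bounded-above acyclic complex of projectives is contractible via a downward-inductive splitting, and check that $K \ten -$ preserves contractibility. The trade-off: the paper's version is shorter but leaves the key exactness facts implicit (in particular, the reason the top map is a quasi-isomorphism is essentially the contractibility statement you prove), whereas your version is self-contained, avoids resolving $K$ altogether, and isolates exactly where projectivity and boundedness above are used. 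Both are standard and both are complete; your sign bookkeeping for the homotopy $H(x \ten y) = (-1)^{|x|} x \ten h(y)$ and for the isomorphism $K \ten \operatorname{Cone}(g) \cong \operatorname{Cone}(1_K \ten g)$ checks out against the differential conventions the paper fixes for $K \ten L$.
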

\begin{proof}
Choose a projective resolution of $K$, that is, a quasi-isomorphism
$f : K' \to K$ with $K' \in \cC^-(\Proj k)$. Then in the following commutative
square
\[
\xymatrix{
{K' \ten L'} \ar[r]^{1 \ten g} \ar[d]^{f \ten 1} &
{K' \ten L} \ar[d]^{f \ten 1} \\
{K \ten L'} \ar[r]^{1 \ten g} & {K \ten L}
}
\]
the vertical maps and the top horizontal map are quasi-isomorphisms. It
follows that the bottom horizontal map is also a quasi-isomorphism.
\end{proof}

\begin{cor} \label{c:qisTQU}
Let $X, X' \in \cC^-(\Mod A)$ and let $Y' \to Y$ be a map of complexes
in $\cC^-(\Mod B)$ which is a quasi-isomorphism. Assume that the terms of
$Y$ and $Y'$ are projective as $k$-modules. If $X \simeq X'$ in $\dA$,
then $X \ten Y \simeq X' \ten Y'$ in $\dAB$.
\end{cor}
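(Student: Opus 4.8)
The plan is to factor the desired isomorphism through the intermediate complex $X' \ten Y$, handling separately the change of the $A$-factor (from $X$ to $X'$) and the change of the $B$-factor (from $Y$ to $Y'$). The first change will be produced by applying the functor $-\ten Y$ on derived categories, and the second by a direct application of Lemma~\ref{l:KLqis}.

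First I would invoke the fact, recorded earlier in this section, that since $Y \in \cC^-(\Mod B)$ has terms which are projective as $k$-modules, the functor $-\ten Y \colon \cC(A) \to \cC(A \ten B)$ descends to a triangulated functor $\dA \to \dAB$. Any functor sends isomorphisms to isomorphisms, so applying this functor to the given isomorphism $X \simeq X'$ in $\dA$ immediately yields $X \ten Y \simeq X' \ten Y$ in $\dAB$. Both $X$ and $X'$ lie in $\cC^-(\Mod A)$, so this takes place within the subcategory $\cD^-(A \ten B)$ to which the functor restricts, and no boundedness issue arises.

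It then remains to prove $X' \ten Y' \simeq X' \ten Y$, and here I would apply Lemma~\ref{l:KLqis} with $K = X'$ viewed, by restriction of scalars, as a complex in $\cC^-(\Mod k)$, and with $g \colon Y' \to Y$ viewed as a quasi-isomorphism in $\cC^-(\Proj k)$; this is legitimate precisely because $Y$ and $Y'$ are bounded above with $k$-projective terms. The lemma then gives that $1_{X'} \ten g \colon X' \ten Y' \to X' \ten Y$ is a quasi-isomorphism of complexes of $k$-modules. The one point deserving care — and essentially the only genuine subtlety — is the passage between module structures: $1_{X'} \ten g$ is a priori only a quasi-isomorphism of underlying $k$-complexes, whereas we want an isomorphism in $\dAB$. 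This is harmless, since $1_{X'} \ten g$ is in fact a morphism of $(A \ten B)$-complexes, and whether a morphism is a quasi-isomorphism is detected solely on the induced maps in cohomology, which do not depend on the $(A \ten B)$-module structure. Hence $X' \ten Y' \simeq X' \ten Y$ in $\dAB$, and concatenating with the previous step gives $X \ten Y \simeq X' \ten Y \simeq X' \ten Y'$, as required.
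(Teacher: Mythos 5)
Your proof is correct and follows essentially the same two-step strategy as the paper: the paper factors through the intermediate complex $X \ten Y'$ (first using exactness of $-\ten Y'$ to replace $X'$ by $X$, then Lemma~\ref{l:KLqis} to replace $Y'$ by $Y$), whereas you factor through $X' \ten Y$ in the mirror-image order, using the identical two ingredients. Your extra remark that quasi-isomorphisms are detected on underlying $k$-complexes is a welcome clarification, but the argument is the same.
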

\begin{proof}
We have $X' \ten Y' \simeq X \ten Y' \xrightarrow{\simeq} X \ten Y$,
where the first isomorphism follows from the exactness of $- \ten Y'$
and the second from Lemma~\ref{l:KLqis}.
\end{proof}

The following lemma is a variation on the K\"{u}nneth
formula~\cite[\S V.10]{MacLane63}, relating the cohomology of a tensor product
of complexes with the tensor product of the cohomologies.

\begin{lemma} \label{l:HKL}
Let $K \in \cC^-(\Mod k)$ and $L \in \cC(\Proj k)$.
Assume that the cohomology of $L$ is concentrated in degree $0$ and that
$\h^0(L)$ is projective. Then the natural map
\[
\h^{\bullet}(K) \ten \h^{\bullet}(L) \to \h^{\bullet}(K \ten L)
\]
is an isomorphism.
\end{lemma}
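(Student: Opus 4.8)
The plan is to reduce the assertion to the case in which $L$ is replaced by the single projective module $P := \h^0(L)$ concentrated in degree $0$, and then to transport cohomology across a quasi-isomorphism by means of the transport result Lemma~\ref{l:KLqis}. Since $\h^\bullet(L)$ is concentrated in degree $0$, the left-hand side $\h^\bullet(K) \ten \h^\bullet(L)$ is simply $\h^\bullet(K) \ten P$, and the whole statement amounts to comparing this with $\h^\bullet(K \ten L)$.

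First I would produce a quasi-isomorphism $\iota : P[0] \to L$. Because $\h^0(L) = \ker d_L^0 / \operatorname{im} d_L^{-1}$ is isomorphic to $P$ and $P$ is projective, the canonical surjection $\ker d_L^0 \twoheadrightarrow P$ splits; choosing a section $s : P \to \ker d_L^0 \subseteq L^0$ and observing that $d_L^0 s = 0$, the map $\iota$ which is $s$ in degree $0$ is a genuine chain map. By construction $\iota$ induces the identity on $\h^0$ and is trivially an isomorphism in every other degree (where both cohomologies vanish), so $\iota$ is a quasi-isomorphism between complexes of projective $k$-modules.

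Next I would identify the source. As $P$ is projective, hence flat over $k$, the functor $- \ten P$ is exact, so $(K \ten P[0])^n = K^n \ten P$ with differential $d_K \ten 1$, giving $\h^\bullet(K \ten P[0]) \cong \h^\bullet(K) \ten P = \h^\bullet(K) \ten \h^\bullet(L)$. This identification is compatible with the map of the lemma: on representatives $[x]\ten[y] \mapsto [x \ten y]$, and using $\iota_* = \mathrm{id}_P$ on $\h^0$, the natural map factors as $\h^\bullet(K) \ten \h^\bullet(L) \xrightarrow{\ \cong\ } \h^\bullet(K \ten P[0]) \xrightarrow{(1_K \ten \iota)_*} \h^\bullet(K \ten L)$. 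Applying Lemma~\ref{l:KLqis} to the quasi-isomorphism $\iota$ and the bounded-above complex $K$ shows that $1_K \ten \iota$ is again a quasi-isomorphism, so $(1_K \ten \iota)_*$ is an isomorphism and the result follows.

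The main obstacle is the first step: producing $\iota$ inside a class of complexes to which Lemma~\ref{l:KLqis} applies. The splitting that yields $\iota$ is precisely where the projectivity of $\h^0(L)$ enters, and the passage through Lemma~\ref{l:KLqis} is where one must keep the complexes of projectives under control, since that lemma is stated for bounded-above complexes and guarantees that tensoring with the arbitrary bounded-above complex $K$ preserves the acyclicity of the cone of $\iota$. I would therefore make sure the reduction lands in the bounded-above setting of $\cC^{-}(\Proj k)$ before invoking Lemma~\ref{l:KLqis}; once that is secured, the remaining verifications (exactness of $-\ten P$ and the compatibility of the factorization above) are routine.
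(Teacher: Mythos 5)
Your proposal is correct and follows essentially the same route as the paper: use the projectivity of $\h^0(L)$ to split $\ker d_L^0 \twoheadrightarrow \h^0(L)$ and obtain a quasi-isomorphism from $\h^0(L)$ placed in degree $0$ into $L$, then transport along $1_K \ten \iota$ via Lemma~\ref{l:KLqis} and identify the source using exactness of $- \ten \h^0(L)$. The only difference is that you are slightly more explicit about the compatibility of the natural map with this factorization and about the boundedness hypotheses needed to invoke Lemma~\ref{l:KLqis}, which the paper leaves implicit.
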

\begin{proof}
Let $d_L$ denote the differential of $L$. Then from the exact sequence
\[
\ker d^0_L \to \h^0(L) \to 0
\]
and the assumption that $\h^0(L)$ is projective, we deduce the existence
of a map $s : \h^0(L) \to \ker d^0_L \hookrightarrow L^0$,
such that the map of complexes
\[
\xymatrix{
L' : & {\dots} \ar[r] & 0 \ar[d] \ar[r] & \h^0(L) \ar[d]^s \ar[r] &
0 \ar[d] \ar[r] & \dots \\
L : & {\dots} \ar[r] & L^{-1} \ar[r] & L^0 \ar[r] & L^1 \ar[r] & \dots
}
\]
is a quasi-isomorphism.

By Lemma~\ref{l:KLqis}, the induced map $K \ten L' \to K \ten L$ is
a quasi-isomorphism, hence
\begin{align*}
\h^{\bullet}(K) \ten \h^{\bullet}(L) &=
\h^{\bullet}(K) \ten \h^{0}(L) \simeq
\h^{\bullet}(K \ten \h^{0}(L)) \\
&= \h^{\bullet}(K \ten L') \simeq \h^{\bullet}(K \ten L)
\end{align*}
where the exactness of $- \ten \h^0(L)$ implies the isomorphism at the
first row.
\end{proof}

\begin{lemma} \label{l:P}
Assume that $B$ is projective as a $k$-module and let $Q \in \proj B$.
Then:
\begin{enumerate}
\renewcommand{\theenumi}{\alph{enumi}}
\item
\label{i:Q} $Q \in \Proj k$.
\item
\label{i:QN} $\Hom_B(Q, N) \in \Proj k$ for any $N \in \Mod B$ which is
projective as a $k$-module.
\end{enumerate}
\end{lemma}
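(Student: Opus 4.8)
The plan is to prove both parts by the same dévissage already used in the proof of Lemma~\ref{l:PQ}: verify the assertion for $Q=B$, and then observe that it is inherited by finite direct sums and by direct summands. This suffices because every $Q \in \proj B$ is, by definition, a direct summand of a finite free module $B^n$ for some $n$.

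For part~(\ref{i:Q}), I would use that the class $\Proj k$ is closed under finite direct sums and under passage to direct summands. By hypothesis $B \in \Proj k$, so the finite free module $B^n$ lies in $\Proj k$; since $Q$ is a direct summand of some such $B^n$, it follows that $Q \in \Proj k$. There is essentially nothing to compute.

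For part~(\ref{i:QN}), the base case is the canonical $k$-module isomorphism $\Hom_B(B, N) \xrightarrow{\sim} N$, $f \mapsto f(1)$, together with the assumption that $N \in \Proj k$. Applying the additive functor $\Hom_B(-, N)$ to a decomposition $B^n = Q \oplus Q'$ yields a $k$-module decomposition $N^n \cong \Hom_B(B^n, N) \cong \Hom_B(Q, N) \oplus \Hom_B(Q', N)$, in which $N^n \in \Proj k$. Hence $\Hom_B(Q, N)$ is a direct summand of a projective $k$-module, so $\Hom_B(Q, N) \in \Proj k$.

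I do not expect any genuine obstacle here: the only place where the hypothesis ``$B$ projective over $k$'' is used is the base case $Q=B$, and everything else is the formal closure of $\Proj k$ under finite sums and direct summands. The single point to watch is to invoke the finiteness of $Q$, so that $B^n$ is a \emph{finite} sum and $\Hom_B(-, N)$ converts it into $N^n$ rather than into an infinite product — exactly as in Lemma~\ref{l:PQ}.
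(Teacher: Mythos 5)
Your proposal is correct and follows the same dévissage as the paper: both claims hold for $Q=B$ and pass to finite direct sums and direct summands, which suffices since $Q\in\proj B$ is a summand of some $B^n$. The paper's proof is just a terser version of exactly this argument (noting only in passing that part~(a) would even pass to arbitrary sums).
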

\begin{proof}
Both claims are true for $Q=B$. Now the first passes to (arbitrary)
sums and direct summands, while the second passes to finite sums and
direct summands.
\end{proof}

For a complex $T \in \dA$, let $\langle \add T \rangle$ denote the
smallest triangulated subcategory of $\dA$ closed under isomorphisms
and taking direct summands. We say that $T$ \emph{generates}
$\langle \add T \rangle$. In particular, $A$ generates $\per A$,
the full subcategory of \emph{perfect complexes} in $\dA$,
which consists of all the complexes that are isomorphic
(in $\dA$) to bounded ones with finitely generated projective terms.

\begin{prop} \label{p:TUtensor}
Assume that $B$ is projective as a $k$-module. Let $T \in \per A$, $U \in
\per B$, $T' \in \cD^-(A)$ and $U' \in \cD^-(B)$. Assume that:
\begin{enumerate}
\renewcommand{\theenumi}{\roman{enumi}}
\item
\label{i:HomUUzero}
$\Hom_{\dB}(U,U'[r]) = 0$ for all $r \neq 0$,
\item
\label{i:UUproj}
The terms of $U$ and $U'$ are projective as $k$-modules,
\item
\label{i:HomUUproj}
$\Hom_{\dB}(U, U')$ is projective as $k$-module,
\end{enumerate}

Then $T \ten U \in \per(A \ten B)$ and for any $r \in \bZ$,
\[
\Hom_{\dAB}\bigl(T \ten U, (T' \ten U')[r]\bigr) \simeq
\Hom_{\dA}(T, T'[r]) \ten \Hom_{\dB}(U, U')
\]
\end{prop}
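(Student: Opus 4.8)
The plan is to reduce the computation of morphisms in the derived category $\dAB$ to homotopy-level computations on concrete projective representatives, using the lemmas established above, and then to invoke the Künneth-type Lemma~\ref{l:HKL} to split the cohomology of the relevant $\Hom$-complex as a tensor product. First I would choose good representatives: since $T \in \per A$ and $U \in \per B$, I may assume $T \in \cC^b(\proj A)$ and $U \in \cC^b(\proj B)$. Then Lemma~\ref{l:PQ}(a) gives $P \ten Q \in \proj(A \ten B)$ for each pair of terms, so $T \ten U$ is a bounded complex of finitely generated projective $(A \ten B)$-modules, establishing the first claim that $T \ten U \in \per(A \ten B)$. For the target, I would replace $T'$ by a quasi-isomorphic complex in $\cD^-(A)$ and keep $U' \in \cD^-(B)$ with terms projective as $k$-modules, using Corollary~\ref{c:qisTQU} to guarantee that $T' \ten U'$ is well-defined in $\dAB$ up to isomorphism.

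Next I would pass to $\Hom$-complexes. Because $T \ten U$ is a bounded complex of finitely generated projectives, morphisms in the derived category $\dAB$ from $T \ten U$ to $(T' \ten U')[r]$ coincide with morphisms in the homotopy category, so
\[
\Hom_{\dAB}\bigl(T \ten U, (T' \ten U')[r]\bigr) \simeq
\h^r\bigl(\Hom^{\bullet}_{A \ten B}(T \ten U,\, T' \ten U')\bigr).
\]
Now Lemma~\ref{l:PQcomplex}, applied with $P = T$, $Q = U$, $X = T'$, $Y = U'$, identifies the $\Hom$-complex on the right with the tensor product
\[
\Hom^{\bullet}_A(T, T') \ten \Hom^{\bullet}_B(U, U').
\]
Thus the whole question becomes the computation of $\h^r$ of a tensor product of two complexes of $k$-modules.

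To finish, I would apply Lemma~\ref{l:HKL} with $K = \Hom^{\bullet}_A(T, T')$ and $L = \Hom^{\bullet}_B(U, U')$. The three hypotheses of the proposition are precisely engineered to meet the hypotheses of that lemma: condition~(\ref{i:HomUUzero}) says $\h^r(L) = \Hom_{\dB}(U, U'[r]) = 0$ for $r \neq 0$, so the cohomology of $L$ is concentrated in degree $0$; condition~(\ref{i:HomUUproj}) says $\h^0(L) = \Hom_{\dB}(U, U')$ is projective as a $k$-module; and condition~(\ref{i:UUproj}), together with Lemma~\ref{l:P}, ensures that the terms of $L = \Hom^{\bullet}_B(U, U')$ lie in $\Proj k$, so that $L \in \cC(\Proj k)$ as required. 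Lemma~\ref{l:HKL} then yields $\h^{\bullet}(K \ten L) \simeq \h^{\bullet}(K) \ten \h^{\bullet}(L)$, and taking the degree-$r$ part gives
\[
\h^r(K \ten L) \simeq \h^r(K) \ten \h^0(L)
= \Hom_{\dA}(T, T'[r]) \ten \Hom_{\dB}(U, U'),
\]
which is the desired formula.

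I expect the main obstacle to be the bookkeeping needed to justify that $\Hom^{\bullet}_B(U, U')$ genuinely satisfies the hypotheses of Lemma~\ref{l:HKL}—in particular verifying that its \emph{terms} (not just its cohomology) are projective $k$-modules, which is where condition~(\ref{i:UUproj}) and Lemma~\ref{l:P}(\ref{i:QN}) enter, and ensuring the degree and sign conventions in~\eqref{e:totHom} are compatible so that the identifications above are genuine isomorphisms of complexes rather than merely of graded modules. The rest is a careful but routine chaining of the preceding lemmas.
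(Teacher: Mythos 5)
Your proposal is correct and follows essentially the same route as the paper: pass to bounded complexes of finitely generated projectives, identify derived morphisms with homotopy morphisms, split the $\Hom$-complex via Lemma~\ref{l:PQcomplex}, and conclude with the K\"{u}nneth-type Lemma~\ref{l:HKL}, whose hypotheses are matched exactly as you describe. The one point the paper spells out that you pass over is that replacing $U$ by a quasi-isomorphic $Q \in \cC^b(\proj B)$ really does replace $T \ten U$ by an isomorphic object of $\dAB$ --- this is where Corollary~\ref{c:qisTQU} is actually needed (together with Lemma~\ref{l:P}(\ref{i:Q}) and hypothesis~(\ref{i:UUproj}), since $- \ten -$ over $k$ is not derived-invariant without such projectivity), rather than for the target $T' \ten U'$, which requires no replacement.
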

\begin{proof}
There exist $P \in \cC^b(\proj A)$ which is isomorphic to $T$ in $\dA$
and $Q \in \cC^b(\proj B)$ which is isomorphic to $U$ in $\dB$. By
Lemma~\ref{l:PQ}, $P \ten Q \in \cC^b(\proj(A \ten B))$. Since the
isomorphism between $Q$ and $U$ in $\dB$ can be represented by a
``roof'' of quasi-isomorphisms in $\cC(B)$
\[
\xymatrix@=1pc{
& \widetilde{Q} \ar[dl] \ar[dr] \\
{Q} & & {U}
}
\]
with $\widetilde{Q} \in \cC^{-}(\Proj B)$, we deduce from
Corollary~\ref{c:qisTQU} that $P \ten Q$ is isomorphic to $T \ten U$ in
$\dAB$, as $Q$, $\widetilde{Q}$ and $U$ are in $\cC^-(\Proj k)$ by
Lemma~\ref{l:P}(\ref{i:Q}) and the assumption~(\ref{i:UUproj}).

Therefore $T \ten U$ is perfect and
\begin{align*}
\Hom_{\dAB}\bigl(T \ten U, (T' \ten U')[r]\bigr) =
\Hom_{\dAB}\bigl(P \ten Q, (T' \ten U')[r]\bigr) = \\
\Hom_{\cK(A \ten B)}\bigl(P \ten Q, (T' \ten U')[r]\bigr) =
\h^r \bigl(\Hom^{\bullet}_{A \ten B}(P \ten Q, T' \ten U')\bigr)
\end{align*}
where $\cK(A \ten B)$ is the homotopy category of $\cC(A \ten B)$.

Now, by Lemma~\ref{l:PQcomplex},
\[
\Hom^{\bullet}_{A \ten B}(P \ten Q, T' \ten U') \simeq
\Hom^{\bullet}_A(P, T') \ten \Hom^{\bullet}_B(Q, U') .
\]

Consider the complex of $k$-modules $\Hom^{\bullet}_B(Q, U')$. By
Lemma~\ref{l:P}(\ref{i:QN}), its terms are projective. Moreover, as
\[
\h^r \bigl(\Hom^{\bullet}_B(Q, U') \bigr) \simeq \Hom_{\dB}(U, U'[r]),
\]
hypotheses~(\ref{i:HomUUzero}) and~(\ref{i:HomUUproj}) imply that
its cohomology is concentrated in degree zero and the zeroth cohomology is
projective as $k$-module. Thus, by Lemma~\ref{l:HKL},
\begin{align*}
\h^r \bigl(\Hom^{\bullet}_A(P, T') \ten \Hom^{\bullet}_B(Q, U') \bigr)
&\simeq
\h^r \bigl(\Hom^{\bullet}_A(P, T') \bigr) \ten
\h^0 \bigl(\Hom^{\bullet}_B(Q, U') \bigr) \\
&\simeq \Hom_{\dA}(T, T'[r]) \ten \Hom_{\dB}(U, U')
\end{align*}
and the claim follows.
\end{proof}

\begin{lemma} \label{l:TUgen}
Assume that $B$ is projective as a $k$-module.
Let $U = U_1 \oplus \dots \oplus U_n$ be a complex in $\cD^-(B)$ that
generates $\per B$ whose terms are projective as $k$-modules, and let
$T_1, \dots, T_n$ be complexes in $\per A$ such that each $T_i$
generates $\per A$.
Then the complex
$(T_1 \ten U_1) \oplus (T_2 \ten U_2) \oplus \dots \oplus
(T_n \ten U_n)$
generates $\per (A \ten B)$.
\end{lemma}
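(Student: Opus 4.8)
The plan is to establish the two inclusions $\langle \add X \rangle \subseteq \per(A \ten B)$ and $\per(A \ten B) \subseteq \langle \add X \rangle$, where $X = (T_1 \ten U_1) \oplus \dots \oplus (T_n \ten U_n)$. For the first inclusion it suffices to check that each summand $T_i \ten U_i$ is perfect. Since $U$ generates $\per B$ it lies in $\per B$, hence so does each direct summand $U_i$; choosing $P_i \in \cC^b(\proj A)$ and $Q_i \in \cC^b(\proj B)$ isomorphic in the derived category to $T_i$ and $U_i$, Lemma~\ref{l:PQ} gives $P_i \ten Q_i \in \cC^b(\proj(A \ten B))$, and Corollary~\ref{c:qisTQU} (using that $U_i$ has $k$-projective terms) identifies $P_i \ten Q_i$ with $T_i \ten U_i$ in $\dAB$, exactly as in the first part of the proof of Proposition~\ref{p:TUtensor}. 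Thus each $T_i \ten U_i$ is perfect, so $\langle \add X \rangle \subseteq \per(A \ten B)$. For the reverse inclusion, since $A \ten B$ generates $\per(A \ten B)$ it is enough to prove $A \ten B \in \langle \add X \rangle$, and I would reach this by two successive generation (``d\'evissage'') arguments, one in the $A$-direction and one in the $B$-direction.

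First I would generate in the $A$-direction. Fix $i$. Because $U_i$ has $k$-projective terms, the functor $- \ten U_i \colon \dA \to \dAB$ is triangulated; being additive, it sends the triangulated summand-closed subcategory generated by $T_i$ into the one generated by $T_i \ten U_i$. As $T_i$ generates $\per A$ we have $A \in \langle \add T_i \rangle$, so $A \ten U_i \in \langle \add (T_i \ten U_i) \rangle$, and the latter is contained in $\langle \add X \rangle$ since $T_i \ten U_i$ is a direct summand of $X$. Hence $A \ten U_i \in \langle \add X \rangle$ for every $i$.

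Next I would generate in the $B$-direction, using a functor $A \ten -$. The idea is to introduce the full subcategory $\cS = \{ V \in \per B : A \ten V \in \langle \add X \rangle \}$, verify that it is a triangulated subcategory of $\per B$ closed under direct summands (as the preimage of the summand-closed triangulated subcategory $\langle \add X \rangle$ under the triangulated functor $A \ten -$), observe that it contains each $U_i$ by the previous paragraph, and conclude $\langle \add U \rangle \subseteq \cS$. Since $\langle \add U \rangle = \per B$ contains $B$, this gives $A \ten B \in \langle \add X \rangle$, which finishes the proof.

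The hard part will be making sense of $A \ten -$ as a triangulated functor $\per B \to \per(A \ten B)$, which is precisely what makes $\cS$ triangulated. The functor $- \ten Y$ was arranged to be exact only because $Y$ had $k$-projective terms; here, tensoring on the \emph{left} with the fixed algebra $A$, which need not be projective over $k$, is not obviously exact on arbitrary complexes. I would circumvent this by realizing $\per B$ as $\cK^b(\proj B)$ and working with the termwise functor $A \ten - \colon \cC^b(\proj B) \to \cC^b(\proj(A \ten B))$: it lands in projectives by Lemma~\ref{l:PQ}(a) (taking $P = A$), and being additive and termwise it preserves mapping cones and chain homotopies, so it descends to a triangulated functor $\cK^b(\proj B) \to \cK^b(\proj(A \ten B)) \simeq \per(A \ten B)$. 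On $k$-projective representatives this functor agrees with the termwise $A \ten V$ up to the isomorphisms of Corollary~\ref{c:qisTQU}, so it computes $A \ten U_i$ as in the first d\'evissage and makes $\cS$ well-defined and triangulated, as required.
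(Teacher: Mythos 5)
Your proposal is correct and follows essentially the same route as the paper: perfectness of each summand via the argument of Proposition~\ref{p:TUtensor}, then d\'evissage in the $A$-direction using $A \in \langle \add T_i \rangle$ and the exact functor $- \ten U_i$, then d\'evissage in the $B$-direction using $B \in \langle \add U \rangle$ and the exact functor $A \ten - \colon \cK^b(\proj B) \to \cK^b(\proj(A \ten B))$. Your extra care in spelling out the subcategory $\cS$ and in justifying why $A \ten -$ is triangulated on $\cK^b(\proj B)$ despite $A$ not being $k$-projective merely makes explicit what the paper leaves implicit.
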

\begin{proof}
Each of the summands $T_i \ten U_i$ lies in $\per (A \ten B)$ by
Proposition~\ref{p:TUtensor}. Moreover, by the argument at the
beginning of the proof of that proposition, we may (and will) assume
that $T_i$ and $U_i$ are bounded complexes of finitely generated
projective modules.

Now $A \in \langle \add T_i \rangle$,
since $T_i$ generates $\per A$, hence $A \ten U_i \in \langle \add
(T_i \ten U_i) \rangle$, as $- \ten U_i$ is exact and commutes with
taking direct summands.

Therefore, it is enough to show that in $\cK^b(\proj (A \ten B))$,
\[
A \ten B \in \langle \add (A \ten U_1), \dots, \add (A \ten U_n)
\rangle.
\]
This follows from the facts that $B \in \langle \add (U_1 \oplus \dots
\oplus U_n) \rangle$, as $U$ generates $\per B$, and $A \ten - :
\cK^b(\proj B) \to \cK^b(\proj (A \ten B))$ is an exact functor.
\end{proof}

Now we have all the ingredients to complete the proof of
Theorem~\ref{t:tensor}.

\begin{proof}[Proof of Theorem~\ref{t:tensor}]
Since the terms of $U_i$ are projective as $k$-modules and $T_i \in \per A$,
we may assume that $T_i \in \cD^-(A)$ by replacing it with a suitable
quasi-isomorphic complex.

The claim that the complex $V = \bigoplus_{i=1}^n (T_i \ten U_i)$ is
perfect, exceptional and that its endomorphism algebra is isomorphic to the
one in~\eqref{e:endo}, is a direct consequence of Proposition~\ref{p:TUtensor}.
Finally, by Lemma~\ref{l:TUgen}, $V$ generates $\per (A \ten B)$.
\end{proof}

\section{Proof of Theorems~\protect{\ref{t:ade}}, \protect{\ref{t:endo}} and
their applications}
\label{sec:proofother}

\subsection{The path algebra of $A_n$}

We quickly review some relevant facts concerning the quiver $A_n$. Let
$R$ be a ring, and denote by $R A_n$ the ring of upper-triangular
$n$-by-$n$ matrices with entries in $R$. More abstractly, it is the
free $R$-module on the basis $\{e_{ij}\}_{1 \leq i \leq j \leq n}$,
with the multiplication given by the rules $r e_{ij} = e_{ij} r$ for $r
\in R$ and $e_{ij} e_{pq}$ equals $e_{iq}$ if $p=j$ and zero otherwise.
When $R$ is commutative, $RA_n$ is known as the \emph{path algebra} of
$A_n$ over $R$, and also as the incidence algebra over $R$ of the
linearly ordered poset on $\{1,2,\dots,n\}$.

The category of (right) $RA_n$-modules is equivalent to the category of
diagrams of $R$-modules of the shape $A_n$. In other words, a module
$M$ over $RA_n$ can be described as a diagram
\[
M(1) \to M(2) \to \dots \to M(n)
\]
of $R$-modules, and a morphism of $RA_n$-modules is just a morphism of
diagrams. Namely, $M(i) = M e_{ii}$ and the map $M(i) \to M(i+1)$ is
given by the right action of $e_{i,i+1}$.

Consider the following $RA_n$-modules, for $1 \leq i \leq n$,
\[
\xymatrix@R=0.5pc@C=1.5pc{
P_i: & 0 \ar[r] & \dots \ar[r] & 0 \ar[r] & R \ar[r]^{1_R} &
R \ar[r]^{1_R} & \dots \ar[r]^{1_R} & R
\\
S_i: & 0 \ar[r] & \dots \ar[r] & 0 \ar[r] & R \ar[r] &
0 \ar[r] & \dots \ar[r] & 0
\\
I_i: & R \ar[r]^{1_R} & \dots \ar[r]^{1_R} & R \ar[r]^{1_R} & R \ar[r] &
0 \ar[r] & \dots \ar[r] & 0
}
\]
where $S_i(i) = R$. These modules are free as $R$-modules, and from the
adjunction
\begin{equation} \label{e:adjPiM}
\Hom_{RA_n}(P_i, M) \simeq \Hom_R(R, M(i)) = M(i)
\end{equation}
we see that $P_i$ are projective $RA_n$-modules. In fact, $RA_n =
\bigoplus_{i=1}^n P_i$.
Note that when $R$ is a field, $P_i$, $I_i$ and $S_i$ are the
indecomposable projective, injective and simple corresponding to the vertex
$i$.

We have $S_n = P_n$, $S_1 = I_1$, $I_n = P_1$, and there are short exact
sequences
\begin{align}
\notag
0 \to P_{i+1} \to P_i \to S_i \to 0 && 1 \leq i < n, \\
\label{e:PSI}
0 \to P_{i+1} \to P_1 \to I_i \to 0 && 1 \leq i < n, \\
\notag
0 \to S_i \to I_i \to I_{i-1} \to 0 && 1 < i \leq n .
\end{align}

\begin{prop}{\ } \label{p:RAntilt}
\begin{enumerate}
\renewcommand{\theenumi}{\alph{enumi}}
\item
\label{i:RAntiltP} $P_1 \oplus \dots \oplus P_n$ is a tilting complex
in $\cD(RA_n)$ with endomorphism ring $RA_n$.

\item
\label{i:RAntiltI} $I_1 \oplus \dots \oplus I_n$ is a tilting complex
in $\cD(RA_n)$ with endomorphism ring $RA_n$. In particular,
\[
\Hom_{\cD(RA_n)}(I_i, I_j) \simeq \Hom_{\cD(RA_n)}(P_i, P_j) =
\begin{cases}
R & \text{if $j \leq i$,} \\ 0 & \text{otherwise.}
\end{cases}
\]

\item
\label{i:RAntiltS}
$S_1 \oplus S_2[1] \oplus \dots \oplus S_n[n-1]$ is a tilting complex
in $\cD(RA_n)$ and
\[
\Hom_{\cD(RA_n)}(S_i[i-1], S_j[j-1]) =
\begin{cases}
R & \text{if $j-i \in \{0,1\}$,} \\ 0 & \text{otherwise.}
\end{cases}
\]
\end{enumerate}
\end{prop}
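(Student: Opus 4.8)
The plan is to handle all three parts with a common toolkit: the adjunction~\eqref{e:adjPiM}, which reduces every $\Hom$ out of a projective $P_m$ to an evaluation $M(m)$, together with the short exact sequences~\eqref{e:PSI}, which furnish explicit finite projective resolutions and, read as triangles, yield the generation statements. Part~(\ref{i:RAntiltP}) then requires essentially no work: since $RA_n = \bigoplus_{i=1}^n P_i$, the complex $P_1 \oplus \dots \oplus P_n$ is just the free module $RA_n$ in degree $0$, hence a compact generator; it is exceptional because projective modules have no higher self-extensions, its endomorphism ring is $RA_n$, and the values $\Hom_{\cD(RA_n)}(P_i,P_j) = P_j(i)$, equal to $R$ for $j \le i$ and $0$ otherwise, drop out of~\eqref{e:adjPiM}.

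For parts~(\ref{i:RAntiltI}) and~(\ref{i:RAntiltS}), first I would replace $I_i$ and $S_i$ by the two-term projective complexes supplied by~\eqref{e:PSI}, namely $I_i \simeq (P_{i+1} \to P_1)$ and $S_i \simeq (P_{i+1} \to P_i)$ for $i<n$, while $S_n = P_n$ and $I_n = P_1$ are already projective; this places all summands, and their shifts, in $\per(RA_n)$. Next I would read the same sequences as triangles to get generation. For~(\ref{i:RAntiltI}), the summand $I_n = P_1$ together with the triangles $P_{i+1} \to P_1 \to I_i \to P_{i+1}[1]$ places every $P_{i+1}$ in $\langle \add(I_1 \oplus \dots \oplus I_n)\rangle$, so this subcategory contains $RA_n$ and, being contained in $\per(RA_n)$, equals it. For~(\ref{i:RAntiltS}) one starts from $S_n = P_n$ and descends through $P_{i+1} \to P_i \to S_i \to P_{i+1}[1]$ to recover $P_{n-1}, P_{n-2}, \dots, P_1$, again producing all of $RA_n$ up to shift.

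It remains to compute the graded $\Hom$-spaces, which simultaneously settles exceptionality. Applying $\Hom^{\bullet}_{RA_n}(-, I_j)$ or $\Hom^{\bullet}_{RA_n}(-, S_j)$ to the resolutions and invoking~\eqref{e:adjPiM} turns each computation into the cohomology of a two-term complex of copies of $R$ whose differential is the evaluation of a structure map of $I_j$ (resp.\ $S_j$). For~(\ref{i:RAntiltI}) this complex is either $R$ (when $j \le i$) or the acyclic $R \xrightarrow{\;1\;} R$ (when $j > i$), giving the tabulated values and the vanishing of $\Hom_{\cD(RA_n)}(I_i, I_j[r])$ for $r \neq 0$; matching these Hom-spaces and their compositions with part~(\ref{i:RAntiltP}) identifies the endomorphism ring as $RA_n$. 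For~(\ref{i:RAntiltS}) the bare computation gives $\Hom_{\cD(RA_n)}(S_i, S_j[m]) = R$ exactly when $(j,m) = (i,0)$ or $(j,m) = (i+1,1)$, and $0$ otherwise.

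The hard part will be the last bookkeeping for~(\ref{i:RAntiltS}). After substituting $m = (j-1)-(i-1)+r = (j-i)+r$ for the shift $[i-1]$ on each summand, the two surviving patterns both force $r=0$, at which point they occur precisely when $j - i \in \{0,1\}$; this yields both the exceptionality and the asserted table. I expect the main obstacle to be keeping the shift $[i-1]$, the internal degree of the Hom-complex, and the sign conventions of~\eqref{e:totHom} aligned, so as to be certain that every potential higher self-extension is cancelled and that the nonzero maps land in degree $0$.
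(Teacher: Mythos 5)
Your proposal is correct and follows essentially the same route as the paper's (much terser) proof: generation via the short exact sequences~\eqref{e:PSI} read as triangles, and exceptionality plus the $\Hom$-tables via the two-term projective resolutions they provide together with the adjunction~\eqref{e:adjPiM}. The shift bookkeeping for part~(\ref{i:RAntiltS}) that you flag as the delicate point indeed works out exactly as you describe.
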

\begin{proof}
The first claim is obvious. For the others, note that the short exact
sequences in~\eqref{e:PSI} show that
\[
\per RA_n = \langle \add(P_1 \oplus \dots \oplus P_n) \rangle
= \langle \add(S_1 \oplus \dots \oplus S_n) \rangle
= \langle \add(I_1 \oplus \dots \oplus I_n) \rangle,
\]
hence each of the complexes is prefect and generates $\per RA_n$. The
proof that each complex is exceptional and the computation of the
morphism spaces follow easily from~\eqref{e:PSI} and the
adjunction~\eqref{e:adjPiM}.
\end{proof}

\begin{remark}
It would have been sufficient to have the above discussion for $R=\bZ$.
The results for general $R$ would then follow from Theorem~2.1
of~\cite{Rickard91} (or Theorem~\ref{t:tensor}).
\end{remark}

\subsection{Theorem~\protect{\ref{t:ade}} and its corollaries}

\begin{proof}[Proof of Theorem~\ref{t:ade}]
Let $A=\gL$, $B=\bZ A_n$ and set $U_i = S_i[i-1]$ for $1 \leq i \leq n$.
The result now follows from Proposition~\ref{p:RAntilt}(\ref{i:RAntiltS})
and Theorem~\ref{t:tensor}.
\end{proof}

\begin{proof}[Proof of the equivalence of Theorem~\ref{t:ade} and
Corollary~\ref{c:itertilt}]
We first assume the theorem and prove its corollary.
Let $\gL_i$ and $Q_i$ be as in Corollary~\ref{c:itertilt}.
Set $T_1 = \gL$. For $1 \leq i < n$, let $F_i$ be the equivalence
\[
F_i = - \tenl {_{\gL_{i+1}} {Q_i}_{\gL_i}} : \cD(\gL_{i+1}) \xrightarrow{\sim}
\cD(\gL_i)
\]
taking $\gL_{i+1}$ to $Q_i$, and set $T_{i+1} = F_1 F_2 \cdot \ldots \cdot F_i
(\gL_{i+1})$. Then $T_1, \dots, T_n$ are tilting complexes in $\dgL$,
\begin{align*}
\Hom_{\dgL}(T_i, T_{i+1}[r]) &=
\Hom_{\dgL}(F_1 \dots F_{i-1} \gL_i, F_1 \dots F_{i-1} F_i \gL_{i+1}[r]) \\
&= \Hom_{\cD(\gL_i)}(\gL_i, F_i \gL_{i+1}[r])
=\Hom_{\cD(\gL_i)}(\gL_i, Q_i[r]) \\
&= \begin{cases}
Q_i & \text{if $r=0$,} \\
0 & \text{otherwise,}
\end{cases}
\end{align*}
and $\End_{\dgL}(T_{i+1}) \simeq \End_{\cD(\gL_{i+1})}(\gL_{i+1}) = \gL_{i+1}$.
The result now follows from the theorem.

\smallskip

Conversely, assume Corollary~\ref{c:itertilt} and let $T_1, \dots, T_n$
be as in Theorem~\ref{t:ade}. Set $\gL_i = \End_{\dgL}(T_i)$ and let
$G_i$ be the equivalence
\[
G_i = \RHom(T_i, -) : \dgL \xrightarrow{\sim} \cD(\gL_i)
\]
taking $T_i$ to $\gL_i$. Consider $Q_i = G_i(T_{i+1}) \in \cD(\gL_i)$ for
$1 \leq i < n$. Then $Q_i$ is a tilting complex in $\cD(\gL_i)$ with
endomorphism ring $\gL_{i+1}$, and from
\[
\Hom_{\cD(\gL_i)}(\gL_i, Q_i[r]) =
\Hom_{\cD(\gL_i)}(G_i T_i, G_i T_{i+1}[r]) \simeq
\Hom_{\dgL}(T_i, T_{i+1}[r])
\]
we see that it is isomorphic (in $\cD(\gL_i)$) to a tilting $\gL_i$-module.
So Corollary~\ref{c:itertilt} can be applied to get the required statement.
\end{proof}

\begin{proof}[Proof of Corollary~\ref{c:rectline}]
Let $m, n \geq 1$, set $\gL = k A_m$ and consider the $k$-module $Q =
\Hom_k(\gL, k)$. It is free with basis elements $\{\vphi_{ji}\}_{1 \leq
i \leq j \leq m}$ defined by $\vphi_{ji}(e_{pq}) = \delta_{ip}
\delta_{jq}$, and has a natural $\gL$-bimodule structure given by
$(\lambda \vphi \lambda')(x) = \vphi(\lambda' x \lambda)$ for $\lambda,
\lambda' \in \gL$ and $\vphi \in Q$. Writing this explicitly for the
basis elements, we have
\begin{equation} \label{e:Qe}
\vphi_{ji} e_{pq} = \begin{cases} \vphi_{jq} & \text{if $p=i$ and $q
\leq j$,} \\ 0 & \text{otherwise,}
\end{cases}
\qquad e_{pq} \vphi_{ji} = \begin{cases} \vphi_{pi} & \text{if $p \geq
i$ and $q = j$,} \\ 0 & \text{otherwise.}
\end{cases}
\end{equation}

We can identify $I_i = \bigoplus_{j \leq i} k \vphi_{ij}$, so that as a
right $\gL$-module, $Q = \bigoplus_{i=1}^{n} I_i$ is a tilting module
by Proposition~\ref{p:RAntilt}(\ref{i:RAntiltI}). Moreover, the left
$\gL$-action on $Q$ coincides with that via $\End_{\gL}(Q_{\gL}) \simeq
\gL$ given in that proposition.

The ``rectangle'' algebra is $kA_m \ten kA_n = \gL \ten kA_n$. The
lower triangular matrix algebra in~\eqref{e:itertilt} is isomorphic to
the following upper triangular one.
\[
\begin{pmatrix}
\gL_n & Q_{n-1} & 0 & \ldots & 0 \\
0 & \ddots & \ddots & \ddots & \vdots \\
\vdots & \ddots & \gL_3 & Q_2 & 0 \\
\vdots & \ldots & 0 & \gL_2 & Q_1 \\
0 & \ldots & 0 & 0 & \gL_1
\end{pmatrix} .
\]
Hence, applying Corollary~\ref{c:itertilt} with $Q_i = Q$ and $\gL_i =
\gL$, we get that $\gL \ten kA_n$ is derived equivalent to the matrix
algebra
\[
\begin{pmatrix}
\gL & Q & 0 & \ldots & 0 \\
0 & \ddots & \ddots & \ddots & \vdots \\
\vdots & \ddots & \gL & Q & 0 \\
\vdots & \ldots & 0 & \gL & Q \\
0 & \ldots & 0 & 0 & \gL
\end{pmatrix}
\]
having a basis $\{e_{ij}^{(s)}\}_{1 \leq i \leq j \leq n, 1 \leq s \leq
n} \cup \{\vphi_{ji}^{(s)}\}_{1 \leq i \leq j \leq n, 1 \leq s < n}$
corresponding to the copies of $\gL$ and $Q$, where the multiplication
$e_{ij}^{(s)} e_{pq}^{(s)}$ is the usual one, $e_{pq}^{(s)}
\vphi_{ji}^{(s)}$ and $\vphi_{ji}^{(s)} e_{pq}^{(s+1)}$ are given by
\eqref{e:Qe}, and all other products are zero.

Finally, ``line'' algebra $A(n \cdot m, m+1)$, which is $kA_{nm}$
modulo the ideal generated by all the paths of length $m+1$, has a
$k$-basis $\{\eps_{ij}\}_{1 \leq i \leq j \leq \min(nm, i+m)}$ where
the product $\eps_{ij} \eps_{pq}$ equals $e_{iq}$ if $p=j$ and $q \leq
i+m$, and zero otherwise. One can then directly verify,
using~\eqref{e:Qe}, that the $k$-linear map defined by
\begin{align*}
e_{ij}^{(s)} \mapsto \eps_{(s-1)m+i, (s-1)m+j} && \vphi_{ji}^{(s)}
\mapsto \eps_{(s-1)m+j, sm+i}
\end{align*}
is an isomorphism of algebras, compare similar calculations
in~\cite{Schroer99} in connection with repetitive algebras.
\end{proof}

\begin{proof}[Proof of Corollary~\ref{c:replica}]
This is immediate from Corollary~\ref{c:itertilt},
since when $\gL$ is Gorenstein, $D\gL$ is a tilting module and $\End_{\gL}(D\gL)
\simeq \gL$.
\end{proof}

\subsection{Theorem~\protect{\ref{t:endo}} and its corollaries}

\begin{proof}[Proof of Theorem~\ref{t:endo}]
Let $A=\gL$, $B=\bZ A_n$ and set $U_i = P_{n+1-i}$ for $1 \leq i \leq n$.
The result now follows from Proposition~\ref{p:RAntilt}(\ref{i:RAntiltP})
and Theorem~\ref{t:tensor}.
\end{proof}

\begin{proof}[Proof of Corollary~\ref{c:endoSum}]
Observe that $\End_{\dgL}(T_1 \oplus \dots \oplus T_n)$ is isomorphic to
the matrix ring
\[
\begin{pmatrix}
\End T_1 & \Hom(T_2, T_1) & \Hom(T_3, T_1) & \ldots & \Hom(T_n, T_1) \\
\Hom (T_1, T_2) & \End T_2 & \Hom(T_3, T_2) & \ldots & \Hom(T_n, T_2) \\
\Hom (T_1, T_3) & \Hom (T_2, T_3) & \End T_3 & \ldots & \Hom(T_n, T_3) \\
\vdots & \vdots & \vdots & \ddots & \vdots \\
\Hom (T_1, T_n) & \Hom (T_2, T_n) & \Hom (T_3, T_n) & \dots & \End T_n
\end{pmatrix}
\]
and the condition~(\ref{i:endoSum}) guarantees that all the terms above the
main diagonal vanish. The result now follows from Theorem~\ref{t:endo}.
\end{proof}

\begin{proof}[Proof of Corollary~\ref{c:endoF}]
Let $T_i = F^{e_i} \gL$ for $1 \leq i \leq n$. Then
\[
\Hom_{\dgL}(T_i, T_j[r]) \simeq \Hom_{\dgL}(\gL, F^{e_j-e_i}\gL[r])
\simeq \h^r(F^{e_j-e_i}\gL) ,
\]
so that conditions~(\ref{i:endoFnz}) and~(\ref{i:endoFz}) match the
corresponding ones in Corollary~\ref{c:endoSum}. The result now follows from
that corollary.
\end{proof}

\subsection{Path algebras of quivers and Auslander algebras}

\begin{proof}[Proof of Corollary~\ref{c:preproj}]
Consider the autoequivalence $F=\nu^{-}[1]$ on the derived category
$\cD^b(\modf kQ)$. By our hypotheses, $F^i(kQ) = \tau^{-i} kQ$ for $0
\leq i \leq r$ are concentrated in degree zero, and moreover
$\h^0(F^{-i}(kQ)) = \tau^i kQ = 0$ for $1 \leq i \leq r$. The result
now follows from Corollary~\ref{c:endoF} with $n=r+1$ and the sequence
$0, 1, \dots, r$.
\end{proof}

\begin{proof}[Proof of Corollary~\ref{c:Aus}]
The corresponding path algebras are homogeneous, so that their
Auslander algebras are of the form $\End_{kQ}(kQ \oplus \dots \oplus
\tau^{-r} kQ)$ for some $r$ determined in~\cite{Gabriel80}. Now apply
Corollary~\ref{c:preproj}.
\end{proof}

\begin{proof}[Proof of Corollary~\ref{c:sAus}]
Each of the diagrams in the table admits at least one orientation
whose path algebra is homogeneous. Now the result follows from
Corollary~\ref{c:preproj} and the fact that the derived
equivalence class of the stable Auslander algebra does not depend
on the orientation.
\end{proof}

\begin{proof}[Proof of Corollary~\ref{c:rectriang}]
The Auslander algebra of the linear orientation on $A_{2n}$ is
isomorphic to the stable Auslander algebra of the linear orientation
on $A_{2n+1}$. Now use Corollary~\ref{c:sAus}.
\end{proof}

\bibliographystyle{amsplain}
\bibliography{tensor}

\providecommand{\bysame}{\leavevmode\hbox to3em{\hrulefill}\thinspace}
\providecommand{\MR}{\relax\ifhmode\unskip\space\fi MR }
\providecommand{\MRhref}[2]{%
  \href{http://www.ams.org/mathscinet-getitem?mr=#1}{#2}
}
\providecommand{\href}[2]{#2}
\begin{thebibliography}{10}

\bibitem{ADE08}
\emph{The {ADE} chain}, Workshop at {B}ielefeld, {O}ctober 2008.

\bibitem{HandbookTilting07}
Lidia Angeleri~H{\"u}gel, Dieter Happel, and Henning Krause (eds.),
  \emph{Handbook of tilting theory}, London Mathematical Society Lecture Note
  Series, vol. 332, Cambridge University Press, Cambridge, 2007.

\bibitem{ABST08}
I.~Assem, T.~Br{\"u}stle, R.~Schiffler, and G.~Todorov, \emph{{$m$}-cluster
  categories and {$m$}-replicated algebras}, J. Pure Appl. Algebra \textbf{212}
  (2008), no.~4, 884--901.

\bibitem{Auslander71}
M.~Auslander, \emph{Representation dimension of artin algebras}, Queen Mary
  College, Mathematics Notes, University of London (1971), 1--179.

\bibitem{BondalKapranov89}
A.~I. Bondal and M.~M. Kapranov, \emph{Representable functors, {S}erre
  functors, and reconstructions}, Izv. Akad. Nauk SSSR Ser. Mat. \textbf{53}
  (1989), no.~6, 1183--1205, 1337.

\bibitem{CarterSegalMacdonald95}
Roger Carter, Graeme Segal, and Ian Macdonald, \emph{Lectures on {L}ie groups
  and {L}ie algebras}, London Mathematical Society Student Texts, vol.~32,
  Cambridge University Press, Cambridge, 1995, With a foreword by Martin
  Taylor.

\bibitem{Chen09}
Xiao-Wu Chen, \emph{The stable monomorphism category of a {F}robenius
  category}, Math. Res. Lett. \textbf{18} (2011), no.~1, 125--137.

\bibitem{Gabriel72}
Peter Gabriel, \emph{Unzerlegbare {D}arstellungen. {I}}, Manuscripta Math.
  \textbf{6} (1972), 71--103; correction, ibid. 6 (1972), 309.

\bibitem{Gabriel80}
\bysame, \emph{Auslander-{R}eiten sequences and representation-finite
  algebras}, Representation theory, {I} ({P}roc. {W}orkshop, {C}arleton
  {U}niv., {O}ttawa, {O}nt., 1979), Lecture Notes in Math., vol. 831, Springer,
  Berlin, 1980, pp.~1--71.

\bibitem{GLS08}
Christof Geiss, Bernard Leclerc, and Jan Schr{\"o}er, \emph{Cluster algebra
  structures and semicanoncial bases for unipotent groups}, preprint at
  \texttt{arXiv:math/0703039}.

\bibitem{GLS07}
\bysame, \emph{Auslander algebras and initial seeds for cluster algebras}, J.
  Lond. Math. Soc. (2) \textbf{75} (2007), no.~3, 718--740.

\bibitem{Happel88}
Dieter Happel, \emph{Triangulated categories in the representation theory of
  finite-dimensional algebras}, London Mathematical Society Lecture Note
  Series, vol. 119, Cambridge University Press, Cambridge, 1988.

\bibitem{HappelSeidel10}
Dieter Happel and Uwe Seidel, \emph{Piecewise hereditary {N}akayama algebras},
  Algebr. Represent. Theory \textbf{13} (2010), no.~6, 693--704.

\bibitem{HerschendIyama09}
Martin Herschend and Osamu Iyama, \emph{{$n$}-representation-finite algebras
  and twisted fractionally {C}alabi-{Y}au algebras}, Bull. London Math. Soc.
  \textbf{43} (2011), no.~3, 449--466.

\bibitem{IyamaOppermann09b}
Osamu Iyama and Steffen Oppermann, \emph{Stable categories of
  {$(n+1)$}-preprojective algebras}, preprint at \texttt{arXiv:0912.3412}.

\bibitem{KoenigZimmermann98}
Steffen K{\"o}nig and Alexander Zimmermann, \emph{Derived equivalences for
  group rings}, Lecture Notes in Mathematics, vol. 1685, Springer-Verlag,
  Berlin, 1998, With contributions by Bernhard Keller, Markus Linckelmann,
  Jeremy Rickard and Rapha{\"e}l Rouquier.

\bibitem{Kontsevich98}
Maxim Kontsevich, \emph{Course at the {ENS}}, Paris, 1998.

\bibitem{Lenzing10}
Dirk Kussin, Helmut Lenzing, and Hagen Meltzer, \emph{Nilpotent operators and
  weighted projective lines}, preprint at \texttt{arXiv:1002.3797}.

\bibitem{Ladkani09b}
Sefi Ladkani, \emph{Derived equivalences and mutation sequences for {A}uslander
  algebras}, in preparation.

\bibitem{Ladkani08}
\bysame, \emph{Derived equivalences of triangular matrix rings arising from
  extensions of tilting modules}, Algebr. Represent. Theory \textbf{14} (2011),
  no.~1, 57--74.

\bibitem{LenzingdelaPena08}
Helmut Lenzing and Jos{\'e}~Antonio de~la Pe{\~n}a, \emph{Spectral analysis of
  finite dimensional algebras and singularities}, Trends in representation
  theory of algebras and related topics, EMS Ser. Congr. Rep., Eur. Math. Soc.,
  Z\"urich, 2008, pp.~541--588.

\bibitem{MacLane63}
Saunders Mac~Lane, \emph{Homology}, Die Grundlehren der mathematischen
  Wissenschaften, Bd. 114, Academic Press Inc., Publishers, New York, 1963.

\bibitem{MiyachiYekutieli01}
Jun-ichi Miyachi and Amnon Yekutieli, \emph{Derived {P}icard groups of
  finite-dimensional hereditary algebras}, Compositio Math. \textbf{129}
  (2001), no.~3, 341--368.

\bibitem{Rickard89}
Jeremy Rickard, \emph{Morita theory for derived categories}, J. London Math.
  Soc. (2) \textbf{39} (1989), no.~3, 436--456.

\bibitem{Rickard91}
\bysame, \emph{Derived equivalences as derived functors}, J. London Math. Soc.
  (2) \textbf{43} (1991), no.~1, 37--48.

\bibitem{Ringel80}
Claus~Michael Ringel, \emph{On algorithms for solving vector space problems.
  {II}. {T}ame algebras}, Representation theory, {I} ({P}roc. {W}orkshop,
  {C}arleton {U}niv., {O}ttawa, {O}nt., 1979), Lecture Notes in Math., vol.
  831, Springer, Berlin, 1980, pp.~137--287.

\bibitem{RingelSchmidmeier08}
Claus~Michael Ringel and Markus Schmidmeier, \emph{Invariant subspaces of
  nilpotent linear operators. {I}}, J. Reine Angew. Math. \textbf{614} (2008),
  1--52.

\bibitem{Schroer99}
Jan Schr{\"o}er, \emph{On the quiver with relations of a repetitive algebra},
  Arch. Math. (Basel) \textbf{72} (1999), no.~6, 426--432.

\end{thebibliography}

\end{document}